\let\oldle\le\def\le{\mathbin{\oldle}} 
\renewenvironment{itemize}
  {\nobreak\begin{list}{$\triangleright$}{%
   \setlength{\parskip}{0mm}
   \setlength{\topsep}{.5\baselineskip}
   \setlength{\rightmargin}{0mm}
   \setlength{\listparindent}{0mm}
   \setlength{\itemindent}{0mm}
   \setlength{\labelwidth}{3ex}
   \setlength{\itemsep}{.5\baselineskip}
   \setlength{\parsep}{0mm}
   \setlength{\partopsep}{0mm}
   \setlength{\labelsep}{1ex}
   \setlength{\leftmargin}{\labelwidth+\labelsep}
   }}{%
   \end{list}\vspace*{-.5\baselineskip}}
\def\models{\vDash}
\def\proves{\vdash}
\def\ZZ{\mathds Z}
\def\NN{\mathds N}
\def\QQ{\mathds Q}
\def\RR{\mathds R}
\def\DD{\mathds D}
\def\IMP{\Rightarrow}
\def\sm{\smallsetminus}
\def\notmodels{\parbox{2.5ex}{\hfil$\vDash$\llap{\raisebox{.0ex}{{\small$/\;$}}}}}
\def\notproves{\parbox{2.5ex}{\hfil$\vdash$\llap{\raisebox{.1ex}{{\;\small$/$}}}}}
\def\Th{{\rm Th}}
\def\<{\langle}
\def\>{\rangle}
\def\0{\varnothing}
\def\theta{\vartheta}
\def\phi{\varphi}
\def\epsilon{\varepsilon}
\def\emph#1{{\blue\boldmath\bfseries #1}}
\def\ssf#1{\textsf{\small #1}}
\newtheoremstyle{mio}
     {2\parskip}
     {0mm}
     {\sl}
     {}
     {\bfseries}
     {}
     {3mm}
     {\llap{\thmnumber{#2}\hskip2mm}\thmname{#1}\thmnote{\bfseries{}#3}}
\newcounter{thm} 
\theoremstyle{mio}
\newtheorem{proposition}[thm]{Proposition}
\newtheorem{void}[thm]{}
\newtheorem{lemma}[thm]{Lemma}
\newtheoremstyle{plain}
     {2\parskip}
     {0mm}
     {}
     {}
     {\bfseries}
     {}
     {3mm}
     {\llap{\thmnumber{#2}\hskip2mm}\thmname{#1}\thmnote{\bfseries{} #3}}
\theoremstyle{plain}
\newtheorem{example}[thm]{Example}
\newtheorem{remark}[thm]{Remark}
\def\lle{\le}
\def\leq{=}
\title{The $\RR$eal truth}
\author{Stefano Baratella \and Domenico Zambella}
\begin{document}
\date{}
\maketitle

\begin{abstract}\footnotetext{\noindent
\textit{2010 Mathematics Subject Classification.} Primary 03B50\\[.5mm]
\textit{\ \ Keywords and phrases}. Many-valued logic. Real-valued logic. Unbounded  signed truth values. \L ukasiewicz logic.  }
 
\noindent We study a real valued propositional logic with unbounded positive and negative truth values that we call \textit{$\RR$-valued logic}. Such a logic is semantically equivalent to continuous propositional logic, with a different choice of connectives. After presenting the deduction machinery and the semantics of $\RR$-valued logic, we prove a  completeness theorem for finite theories. Then we define unital and Archimedean theories, in accordance with the theory of Riesz spaces. In the unital setting,  we prove the equivalence of consistency and satisfiability and an approximated completeness theorem similar to  the one that holds for continuous propositional logic.  Eventually, among unital theories, we characterize Archimedean theories as those for which strong completeness holds. We also point out that $\RR$-valued logic provides alternative calculi for \L ukasiewicz logic and for propositional continuous logic.
\end{abstract}

\section{Introduction}
Many-valued logics date back to the very early development of mathematical logic. The real unit interval $[0,1]$ has in particular been  a favorite set of truth values. Among $[0,1]$-valued logics, \textit{\L ukasiewicz logic} occupies a predominant place (see e.g.\@ \cite{Hajek}). Initially motivated by philosophical considerations and scientific curiosity, \L ukasiewicz logic has received considerable attention by philosophers and computer scientists working on fuzzy logics, approximate reasoning  or other forms of non-classical reasoning. On the algebraic side, the study of \L ukasiewicz logic led to the notion  of an MV-algebra. C.C. Chang first showed that MV-algebras provide a complete semantics for \L ukasiewicz logic. See \cite{Chang1},~\cite{Chang2}. 

\textit{Continuous logic} makes its first appearence in \cite{BYU}. Its propositional fragment extends  \L ukasiewicz logic. See, for instance, the overview in \cite{BYP}. In spite of appearing just an  extension of \L ukasiewicz logic, continuous logic has an independent origin and  different motivations. Actually, it builds on a field of research, initiated in the 1980's by Henson and continued by a number of authors, on  the model theory of so called \textit{metric structures}.  Continuous logic provides a suitable logical setting for dealing with structures like Banach spaces, Banach algebras, probability algebras, etc. that arise in functional analysis or probability theory. See \cite{BYBHU}. 

As the origin of continuous logic lies in model theory, issues like developing deduction systems and studying their completeness have been somewhat postponed. A completeness theorem for continuous predicate logic appears in \cite{BYP}. The authors make use of results from~\cite{BYrv}, where a completeness result for propositional continuous logic is derived from the corresponding result for \L ukasiewicz logic.

Being $[0,1]$-valued, continuous logic apparently deals with bounded structures only. Such a limitation can be overcome  by allowing many-sorted structures, but the many-sorted approach suffers from some drawbacks (see~\cite{BY}). Those can only be avoided by passing to a genuine logic for unbounded structures: one such logic has been introduced in~\cite{BY}.

Strongly related to our work is an unbounded real valued logic that predates \cite{BY} and has been introduced with different motivations. We refer to  the \textit{abelian logic}  of  \cite{MS} (independently introduced in  \cite{Ca}).    In  \cite{MS}, the authors provide a sound and complete axiomatization of abelian logic with respect to the class of abelian $l$-groups (equivalently: with respect to the $l$-group of the reals). The connections between abelian and \L ukasiewicz logic and calculi for both logics have been studied in \cite{MO}.  Among  other results, in  \cite{MO}  the authors prove  soundness and completeness of a hypersequent calculus and of  labelled and unlabelled sequent  calculi for abelian logic. They also establish the computational complexity of the labelled calculus.
  
In this paper we  develop a syntactic calculus in Hilbert style for a real-valued propositional logic,  named \textit{$\RR$-valued logic}.  Our logic can be viewed as an extension of abelian logic. As already mentioned, abelian logic is the logic of lattice-ordered abelian groups, which has the reals  among its characteristic models. 

Ours  is the logic of lattice-ordered vector spaces, also known as Riesz spaces (see \cite{Aliprantis}).  As proved in Section~\ref{completeness}, $\RR$-valued logic  has the reals among its characteristic models (see, in particular,   Remark~\ref{compl_char_mod} below). From the syntactic viewpoint, $\RR$-valued logic comes equipped with scalar multiplication as an additional formation rule for formulas. In this paper we restrict  scalar multiplication to multiplication by rationals in order not to rule the possibility of working with a countable language (this might be useful in future developments). The results obtained in this paper extend to multiplication by reals in a straightforward manner.

There is a natural overlapping of the results proved in this paper with those obtained in \cite{MS}, \cite{MO} or, more in general, in the area of  \textit{mathematical fuzzy logic}  (see  \cite{AA.VV.1} and  \cite{AA.VV.2} for a comprehensive and up-to-date  presentation of the latter).  Nevertheless the proof techniques used in the different contexts have their own  distinctive vein: algebraic (in this paper); logically-minded (in \cite{MS}) and proof-theoretic (in \cite{MO}).  The different contexts in which abelian and $\RR$-valued logic are developed may make a step-by-step translation of one logic into the other not completely straightforward. We do not investigate that issue because we believe that the strong connections between the two logics are witnessed, beyond any doubt, by their respective completeness theorems.

Yet we may say  that some of the completeness results  proved  in this paper can be regarded as extensions of  those in \cite{MS} or in \cite{MO}, as they apply to a  logic more expressive than abelian logic.

A further reason why, in our opinion,  $\RR$-valued logic is worth being investigated  is that it creates a link between the two independently developed areas of continuous logic and  of  mathematical fuzzy logic. 

We shall investigate a predicate extension of  $\RR$-valued logic in a future work. As already mentioned, a syntactic calculus for predicate continuous logic  is presented in \cite{BYP}, while the semantics of unbounded continuous logic is introduced  in \cite{BY}. From the latter it is evident that there is no straightforward interpretation of  the quantifiers in the unbounded case. Difficulties are also met on the abelian logic side. Predicate extensions of abelian logic are  very quickly sketched in \cite[\S X]{MS}, where the authors admit their  conflicting ideas about the treatment of quantifiers. In any case, ours remains  a necessary step towards a possible predicate extension.

In the framework of $\RR$-valued logic we introduce a  notion of \textit{theory}  and, under suitable assumptions on theories, we prove different formulations of completeness  with respect to a semantics that we introduce in Section~\ref{structures}. 

In Section~\ref{axioms} we introduce the syntactic calculus of $\RR$-valued logic, which turns out to be quite different from the one introduced in \cite{BYP} and~\cite{BYrv}.  Having in mind the axiomatization of Riesz spaces (i.e. of  lattice-ordered vector spaces,  see \cite{Aliprantis}), our logical axioms turn out to be quite natural. As for deduction rules, we show that modus ponens alone does not suffice to get completeness.  For this reason we have to introduce two additional rules. Then we prove a number of properties of our provability relation, including the correctness of a sort of cut rule, see Proposition~\ref{prop_cut_elim}.

It is a standard terminology to say that, in  a given logic, \textit{strong completeness} holds for a set $\Sigma$ of formulas if $\Sigma\models\phi$ if and only if $\Sigma\proves\phi$, for every formula $\phi.$ If the previous property holds for all (finite) $\Sigma$, one says that \textit{(finite) strong completeness} holds for that logic. We recall that finite strong completeness holds for \L ukasiewicz logic and for continuous propositional logic.
 
In Section~\ref{lin_for} we prove finite strong completeness of  $\RR$-valued logic with respect to a suitable subclass of formulas. The proof is obtained by reducing finite strong completeness to a problem in convex analysis, whose solution is provided by suitable formulations of Farkas' Lemma.

In Section~\ref{completeness} we  extend   the  completeness theorem obtained in Section~\ref{lin_for} to all formulas.  At the end of Section~\ref{completeness} we also show that $\RR$-valued logic provides alternative calculi for \L ukasiewicz logic and for continuous propositional logic.

Another weak formulation of completeness, namely the equivalence of consistency and satisfiability, holds for \L ukasiewicz logic and for continuous propositional logic. We refer to the equivalence of consistency and satisfiability as  \textit{weak} completeness. Continuous propositional logic also satisfies a further weak version formulation of completeness, a so called \textit{approximated} completeness (see \cite{BYP}). We  show that validity of corresponding results in $\RR$-valued logic is not granted without an additional  hypothesis, as  not every consistent set of assumptions extends to a maximal consistent one.

In Section~\ref{Archimedean} we define the classes of unital and Archimedean theories. We show that weak and approximated completeness  (in the sense of continuous logic) both hold for unital theories. Finally, among unital theories, we characterize Archimedean theories as those for which strong completeness holds.

\section{Formulas, structures and theories}\label{structures}

In this section we begin the description of $\RR$-valued logic. We remain rather informal, in order not to bother a supposedly experienced reader with a number of minor details.

We identify  a \emph{language} $L$ with  its extralogical symbols. So language $L$ is a set of  symbols that we call  \emph{proposition letters}.  The set of \emph{formulas} is the least set containing the proposition letters and closed under the connectives in the set $\{0, +,\wedge\}\cup\QQ$, where $0$ is a logical constant; rational numbers are unary connectives; $+, \wedge$ are binary connectives. 

Our choice of connectives is inspired by Riesz spaces (see \cite{Aliprantis}), in the same way as  the connectives of classical logic are inspired by Boolean algebras.

We also consider an extension of logical symbols  obtained by adding  the logical constant $1$. We shall refer to these two settings as the \emph{restricted} and the \emph{extended} case respectively.
 
More explicitly, all proposition letters are formulas and, if $\phi$ and $\psi$ are formulas, then so are:

\begin{minipage}[t]{.599\textwidth}
\begin{itemize}
\item[0.] $0$ and,  in the extended case, $1$;
\item[1.] $\psi+\xi$;
\end{itemize}
\end{minipage}
\begin{minipage}[t]{.399\textwidth}
\begin{itemize}
\item[2.] $\psi\wedge\xi$;
\item[3.] $q\phi$  for every $q\in\QQ$.\end{itemize}

\end{minipage}

\medskip
Actually, we have been informal above as, for instance, braces will soon appear.   We write $-\phi$ for $(-1)\phi$ and $\phi-\psi$ for $\phi+(-\psi)$. We write $\phi\vee\psi$ for $-(-\phi\wedge\nobreak-\psi)$. We also borrow some notation from Riesz spaces: $\phi^+$ and $\phi^-$ stand for $0\vee\phi$ and  $0\vee(-\phi)$ respectively. We write $|\phi|$ for $\phi\vee(-\phi)$. In the extended case, we abbreviate $q1$ with $q$, for all $q\in\QQ$. 
 
A \emph{structure}, or \emph{model}, for  a language $L$ is a function $M:L\to \RR$. In the extended case we further require that $M$ takes values in the interval $[-1,1]$. This semantic requirement and its axiomatic counterpart (axiom \ssf{a15} below) are motivated by the consequences of Proposition~\ref{prop_ext_bound} below.

Let $M$ be a model. We recursively define  $\phi^M$ for an arbitrary formula  $\varphi$  as follows:
\begin{minipage}[t]{.599\textwidth}
\begin{itemize}
\item[0.] $0^M=0$;
\item[1.] $1^M=1$, in the extended case;
\item[2.] $P^M=M(P)$, for all proposition letters $P$; 
\end{itemize}
\end{minipage}
\begin{minipage}[t]{.399\textwidth}
\begin{itemize}
\item[3.] $(\psi+\xi)^M=\psi^M+\xi^M$;
\item[4.] $(\psi\wedge\xi)^M=\min\big\{\psi^M,\xi^M\big\}$;
\item[5.] $(q\psi)^M=q\psi^M$. 
\end{itemize}
\end{minipage}
\medskip

We use the word \emph{inequality\/} as a synonym for an \emph{ordered pair of formulas}. Inequalities are denoted by writing $\phi\lle\psi$. In order to simplify notation, we abbreviate the inequality $0\le\phi$ with $\phi$. 

A \emph{theory\/} is a binary relation on the set of formulas, namely a set of inequalities. We elaborate on this definition of theory in Remark~\ref{rk_solid} below.

If $M$ is a model and $\phi^M\le\psi^M$ we write $M\models\phi\lle\psi$ and we say that $\varphi\le\psi$ holds in $M$. If $T$ is a theory, we define $M\models T$ and $T\models\phi\lle\psi$ in the usual way. We write $M\models 0<\phi$ if $0<\phi^M$. Finally, we write $\Th(M)$ for the set of inequalities that hold in $M$.

We finish  this section by commenting on our choice of logical connectives. The presence of a unary connective for each element of $\QQ$ is just a matter of convenience. Alternative meaningful choices are $\{-1\}$ or $\{-\frac12\}$. Together with addition, the former singleton yields a connective for each element of $\ZZ$. The latter yields a connective for each element of the set $\DD$ of dyadic rationals. With respect to both choices, logical axioms  for lattice modules replace  those for vector lattices that occur in our setting. Having in mind an extension to the predicate case, we opt for a complete (in the sense of \cite{BYU}) set of connectives. In this regard, $\DD$ and $\QQ$ are equivalent choices.

\section{Logical axioms and derivations}\label{axioms}
The following inequalities, where $\phi$, $\psi$ and $\xi$ range over all formulas, are called \emph{logical axioms\/}. We write $\phi=\psi$ to denote the theory $\{\phi\le\psi, \psi\le\phi\}$. 

So an axiom expressing an equality (see below) actually stands for  a pair of axioms. We also write $\phi\le\xi\le\psi$ to denote the theory $\{\phi\le\xi, \xi\le\psi\}.$ The reader can make sense by her-/him-self of some other minor notational abuses.

In the following, $\QQ^+$ denotes the set of nonnegative rationals.
 
There are two axiom groups. The axioms from the first group are chosen having in mind the theory of vector spaces over $\QQ$:

\begin{minipage}[t]{.499\textwidth}
\begin{itemize}
\def\lle{\ \le\ }
\def\leq{\ =\ }
\item[a1.] $\phi+\psi\leq\psi+\phi$
\item[a2.] $(\phi+\psi)+\xi\leq\psi+(\phi+\xi)$
\item[a3.] $\phi+0\leq\phi$
\item[a4.] $1\,\phi\leq\phi$
\end{itemize}
\end{minipage}
\begin{minipage}[t]{.499\textwidth}
\begin{itemize}
\def\lle{\ \le\ }
\def\leq{\ =\ }
\item[a5.] $0\,\phi\leq0$
\item[a6.] $r\,\phi+s\,\phi\leq(s+r)\,\phi$
\item[a7.] $r\,\phi+r\,\psi\leq r\,(\phi+\psi)$
\item[a8.] $r\,(s\,\phi)\leq(rs)\,\phi$
\end{itemize}
\end{minipage}
\medskip

The axioms from the second group are inspired by the theory of Riesz spaces:

\begin{minipage}[t]{.499\textwidth}
\begin{itemize}
\def\lle{\ \le\ }
\def\leq{\ =\ }
\item[a9] $\phi\wedge\phi\leq\phi$
\item[a10.] $\phi\wedge\psi\leq\psi\wedge\phi$
\item[a11.] $(\phi\wedge\psi)\wedge\xi\leq\phi\wedge(\psi\wedge\xi)$
\end{itemize}
\end{minipage}
\begin{minipage}[t]{.499\textwidth}
\begin{itemize}
\def\lle{\ \le\ }
\def\leq{\ =\ }
\item[a12.] $(\phi+\xi)\wedge(\psi+\xi)\leq\phi\wedge\psi + \xi$
\item[a13.] $r(\phi\wedge\psi)\leq r\phi\wedge r\psi$ for $r\in\QQ^+$
\item[a14.] $\phi\wedge\psi\lle\psi$
\end{itemize}
\end{minipage}
\smallskip

 In the extended case, we  add the following axioms for every proposition letter $P$:\nobreak
\begin{itemize}
\item[a15] $-1\lle P \lle 1$.
\end{itemize}

\def\provesL{\,\vdash\hskip-1.7ex\raisebox{-.5ex}{\tiny\sf lin}\ }
\def\provesMP{\,\vdash\hskip-1.7ex\raisebox{-.5ex}{\tiny\sf mp}\ }

There are three inference rules that are listed below. In the sequel we shall provide some arguments in favour of their mutual independence and their non-replaceability with logical axioms, even though we are not primarily concerned with these issues.

\begin{itemize}
\item[r1.]  $\phi\lle\xi\lle\psi\ \proves\ \phi\lle\psi$;\hfill (Transitivity or modus ponens)

\item[r2.] $\phi\lle\psi\  \proves\  r\phi+\xi\ \lle\  r\psi+\xi$ 
for $r\in\QQ^+$;\hfill (Positive linearity)

\item[r3.] $\phi\lle\psi\ \proves\  \phi\wedge0\ \lle\  \psi\wedge0$.  \hfill (Restriction)

\end{itemize}
\def\provesi{\,\vdash\hskip-1.4ex\raisebox{-.5ex}{\tiny\sf i}\ }
The notion of derivation is the standard one in Hilbert systems. As is customary, we write $T\vdash \varphi\le\psi$  if there exists a derivation  of  $\varphi\le\psi$ from $T$.
We write $\provesMP$ for derivability from \ssf{r1} only and $\provesL$ for derivability from rules \ssf{r1} and \ssf{r2} only.

The following proposition states some facts that we shall frequently use in the sequel. The first fact states the invertibility of rule \ssf{r1};  the second one is a generalization of  \ssf{r3}.

\begin{proposition}\label{prop_basic_inferences1}
The following hold for all formulas $\phi, \xi, \psi$:\\
\begin{minipage}[t]{.649\textwidth}
\begin{itemize}
\item[1.] $r\phi + \xi\;\lle\;r\psi + \xi\ \provesL\  \phi\lle\psi$ for every $0<r\in\QQ$;
\item[2.] $\phi\lle\psi\ \proves\  \phi\wedge\xi\ \lle\ \psi\wedge \xi$;
\end{itemize}
\end{minipage}
\begin{minipage}[t]{.349\textwidth}
\begin{itemize}
\item[3.] $\xi\lle\phi,\ \xi\lle\psi\ \proves\  \xi\ \lle\ \phi\wedge\psi$;
\item[4.] $\phi\lle\xi,\ \psi\lle\xi\ \proves\  \phi\vee\psi\ \lle\ \xi$.
\end{itemize}
\end{minipage}
\end{proposition}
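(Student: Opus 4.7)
For part~1 (invertibility of positive linearity), the plan is to apply rule~\ssf{r2} a second time, with rate $1/r$ and additive term $-(1/r)\xi$, to the hypothesis $r\phi+\xi\le r\psi+\xi$. This yields $(1/r)(r\phi+\xi)+(-1/r)\xi\le(1/r)(r\psi+\xi)+(-1/r)\xi$. Using the vector-space axioms \ssf{a1}--\ssf{a8} (in particular distributivity~\ssf{a7}, $(rs)$-homogeneity~\ssf{a8}, unitality~\ssf{a4}, additivity of scalars~\ssf{a6}, $0\phi=0$~\ssf{a5}, and $\phi+0=\phi$~\ssf{a3}), together with~\ssf{r2} at rate~$1$ used to substitute equals inside sums, both sides collapse to $\phi$ and $\psi$ respectively. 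Chaining the resulting equalities with the starting inequality through transitivity~\ssf{r1} delivers $\phi\le\psi$. Since neither \ssf{r3} nor the lattice axioms intervene, the derivation lives in $\provesL$.

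For part~2, the key idea is to reduce meet with an arbitrary $\xi$ to meet with $0$, which is precisely what~\ssf{r3} handles. Starting from $\phi\le\psi$, I first use~\ssf{r2} (rate $1$, additive term $-\xi$) to obtain $\phi-\xi\le\psi-\xi$, then apply~\ssf{r3} to get $(\phi-\xi)\wedge 0\le(\psi-\xi)\wedge 0$, and finally shift back by $\xi$ via~\ssf{r2}. Axiom~\ssf{a12}, read from right to left with extra term $\xi$, rewrites $((\phi-\xi)\wedge 0)+\xi$ as $((\phi-\xi)+\xi)\wedge(0+\xi)$, which the vector-space axioms reduce to $\phi\wedge\xi$; symmetrically the right-hand side becomes $\psi\wedge\xi$, and one last transitivity closes the argument.

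Parts 3 and 4 will follow by short combinations of what is already in hand. For part~3, from $\xi\le\psi$ part~2 gives $\xi\wedge\xi\le\psi\wedge\xi$, which via idempotency~\ssf{a9} and commutativity~\ssf{a10} reads $\xi\le\xi\wedge\psi$; from $\xi\le\phi$ part~2 gives $\xi\wedge\psi\le\phi\wedge\psi$; transitivity yields $\xi\le\phi\wedge\psi$. For part~4, I first derive $-\xi\le-\phi$ from $\phi\le\xi$ by applying~\ssf{r2} with additive term $-\phi-\xi$ and simplifying, and symmetrically $-\xi\le-\psi$; part~3 then gives $-\xi\le(-\phi)\wedge(-\psi)$; a last negation (same trick, together with $-(-\xi)=\xi$ via~\ssf{a8}) produces $\phi\vee\psi=-((-\phi)\wedge(-\psi))\le\xi$. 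The only conceptual obstacle will be the maneuver exploited in part~2: rule~\ssf{r3} truncates only at~$0$, yet we need truncation at an arbitrary~$\xi$, and the translation by~$\xi$ mediated by~\ssf{a12} is what bridges the two. Everything else amounts to bookkeeping: repeatedly chaining \ssf{r1} and \ssf{r2} to justify the usual $\QQ$-vector-space manipulations inside larger contexts.
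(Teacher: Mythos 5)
Your proof is correct and follows essentially the same route as the paper's: part 1 by applying \ssf{r2} with the reciprocal rate and cancelling $\xi$, part 2 by translating the meet to $0$ via \ssf{r2}, truncating with \ssf{r3}, and translating back with \ssf{a12}, and parts 3 and 4 as short consequences of part 2 and of $\phi\le\psi\proves-\psi\le-\phi$. No substantive differences to report.
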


\begin{proof} (Sketch)
\begin{itemize}

\item[1.] Get $r^{-1}r\phi + \xi-\xi\lle r^{-1}r\psi + \xi-\xi$ by \ssf{r2}. Another application of \ssf{r2} to axioms,  together  with \ssf{r1}, yields $\phi\lle\psi$.

\item[2.] Get $\phi-\xi\lle\psi-\xi$ by \ssf{r2} and $(\phi-\xi)\wedge0\lle(\psi-\xi)\wedge0$ by \ssf{r3}. Add $\xi$ on both sides and apply \ssf{a12}.

\item[3.] Get $\xi\wedge\psi\lle\phi\wedge \psi$ from  $\xi\lle\phi$ by \ssf{2} and $\xi\lle\xi\wedge\psi$ from  $\xi\lle\psi$, then apply \ssf{r1}.

\item[4.] Follows from \ssf{3}, since $\phi\lle\psi\proves -\psi\lle-\phi$.\qedhere
\end{itemize}
\end{proof}

\begin{remark} Notice that, for every theory $T$ and every inequality $\phi\le\psi,$ $$T\proves \phi\le\psi\,\Leftrightarrow\, T\proves \psi-\phi.$$ 

Implication $\Rightarrow$ follows by application \ssf{r2}.  The converse implication is a consequence of \ssf{1}, Proposition~\ref{prop_basic_inferences1}.  The same equivalence holds for $\provesL$.  We shall use these facts without further mention in what follows (in particular in the formulation of  the soundness and completeness theorems).

\end{remark}

The following is straightforward:

\begin{void}[Soundness Theorem]\label{prop_soundness}
For every theory $T$ and every formula  $\varphi$, if $T\proves\phi$ then $T\models\phi$.
\end{void}

We shall prove in the sequel that the converse implication holds for finite theories and, under additional assumptions, for infinite theories as well.

A na\"{\i}ve formulation of the classical deduction theorem for linear derivability would  be the following: if $T,\theta\provesL\psi$ then $T\provesL\theta\lle\psi$. Unfortunately this does not hold. For it holds that $2Q\lle P, Q \provesL P$,  but $2Q\lle P \not\provesL Q\lle P$, by Theorem~\ref{prop_soundness}. The same argument  applies to $\proves$ as well. The previous counterexample also appears in \cite{BYP}. Nevertheless weaker formulations of the deduction theorem hold for both deductive systems. Even if we are not  going to establish  a completeness theorem with respect to $\provesL,$   we deal first with it in order to make the reader acquainted with the deductive system.

\begin{void}[Linear Deduction Theorem]\label{prop_lin_ded}
The following are equivalent:
\begin{itemize}
\item[1.] $T,\,\theta \provesL\phi\lle\psi$;
\item[2.] $T\provesL\phi+r\theta\lle\psi$ for some $r\in\QQ^+$.
\end{itemize}
\end{void}

\begin{proof}
As \ssf{2}$\IMP$\ssf{1} is trivial, we prove \ssf{1}$\IMP$\ssf{2}. We argue by induction on the length of a derivation of $\varphi \le \psi$ from $T, \theta$.  If the length is $1$, then either $\phi\lle\psi$ is in $T$ or it is an axiom, in which case we take $r=0$, or $\phi\lle\psi$ is syntactically equal to $0\lle\theta$, so the conclusion follows by taking $r=1$.  

If the last rule applied in the derivation is \ssf{r1} then $T,\theta\provesL\phi\lle\zeta$ and $T,\theta \provesL\zeta\lle\psi$,  for some formula $\zeta$. By induction hypothesis, $T\provesL\phi+r_1\theta\lle\zeta$ and  $T\provesL\zeta+r_2\theta\lle\psi$, for some $r_1, r_2\in \QQ^+$. The conclusion follows by taking  $r=r_1+r_2$.

If the last rule applied in the derivation is \ssf{r2} then there exist $\phi^\prime, \psi^\prime, \xi$ and $s\in \QQ^+$  such that  $\phi$  and $\psi$ are  $s\phi^\prime+\xi$ and $s\psi^\prime+\xi$ respectively and $T,\theta \provesL\phi^\prime\lle\psi^\prime$. By induction hypothesis $T\provesL\phi^\prime+t\theta\lle\psi^\prime$, for some $t\in \QQ^+$. So the conclusion follows by taking  $r=st$.
\end{proof}

The proof of the Linear Deduction Theorem can be easily adapted to prove a similar statement for $\provesMP$ in place of $\provesL$,  also getting the  stronger conclusion that $r\in \NN$. We exploit this fact to argue that $\provesL$ is indeed stronger than $\provesMP$. Notice that $2P\provesL P$. On the other hand, if it were that  $2P\provesMP P$ then, by the Deduction Theorem for $\provesMP$,  we would get $\provesMP r2P\le P$ for some $r\in\NN$. But the latter is not a valid derivation.

We justify the  presence of rule \ssf{r3} in a similar way. We notice that $P\not\provesL P\wedge0$. Otherwise, by the Linear Deduction Theorem, there would be $r\in\QQ^+$ such that $\provesL rP\le P\wedge0.$ Then $\models rP\le P\wedge0,$ which does not hold.

 \,From the Linear Deduction Theorem we get the following:

\begin{proposition}\label{lin_cut}
The following are equivalent:
\begin{itemize}
\item[1.] $T\provesL\psi$;
\item[2.] $T,\,\phi\provesL \psi$ and $T,-\phi\provesL \psi$.
\end{itemize}
\end{proposition}
\begin{proof}
As \ssf{1}$\IMP$\ssf{2} is trivial, we prove \ssf{2}$\IMP$\ssf{1}. From \ssf{2} and the Linear Deduction Theorem we obtain
\begin{itemize}
\item[] $T\provesL\,r\phi\lle \psi$ and $T\provesL -s\phi\lle \psi$.
\end{itemize}
Assume $r,s>0$, otherwise the conclusion is trivial. Then $T\provesL (r^{-1}+s^{-1})\psi$, from which  $T\provesL\psi$ follows.
\end{proof}

Notice that   \ssf{2}$\IMP$\ssf{1} above states the correctness of a sort of cut rule for  $\provesL$. Below, in Proposition~\ref{prop_cut_elim}, we prove a similar result for $\proves$.

The following are basic identities valid in Riesz spaces (see e.g.~\cite{Aliprantis}) that can also be proved in $\RR$-valued logic. We include a proof sketch for convenience.

\begin{proposition}\label{prop_basic_Riesz}
The following hold for all formulas $\phi, \psi$:\\
\begin{minipage}[t]{.499\textwidth}
\begin{itemize}
\item[1.] $\proves \phi+\psi=\phi\wedge\psi+\phi\vee\psi$;
\item[2.]$\proves \phi=\phi^+-\phi^-$;
\end{itemize}
\end{minipage}
\begin{minipage}[t]{.499\textwidth}
\begin{itemize}
\item[3.]  $\proves \phi^+\wedge\phi^-=0$;
\item[4.]  $\proves |\phi|=\phi^++\phi^-$.
\end{itemize}
\end{minipage}
\end{proposition}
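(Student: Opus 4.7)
My plan is to derive the four identities in the stated order, each using its predecessors, with one preparatory lemma: the $\vee$-analog of \ssf{a12}, namely $(\phi+\xi)\vee(\psi+\xi) = \phi\vee\psi + \xi$. This follows by unfolding $\phi\vee\psi := -(-\phi\wedge-\psi)$, invoking \ssf{a12} on the resulting $\wedge$-expression, and using \ssf{a7} together with \ssf{a8} at $r=-1$ to distribute the minus sign across sums. The analogous $\vee$-version of \ssf{a13}, $r(\phi\vee\psi) = r\phi\vee r\psi$ for $r\in\QQ^+$, is derived the same way. I will also use freely the identity $\phi+(-\phi)=0$, which comes from \ssf{a4}--\ssf{a6}.

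For \ssf{1}, I instantiate \ssf{a12} with $-\phi$, $-\psi$ in place of $\phi,\psi$ and with $\xi := \phi+\psi$. The left side collapses by \ssf{a1}--\ssf{a8} to $\psi\wedge\phi$, the right side to $-(\phi\vee\psi)+\phi+\psi$, and rearranging yields $\phi+\psi = \phi\wedge\psi+\phi\vee\psi$. For \ssf{2}, I specialize \ssf{1} at $\psi:=0$: since $\phi\wedge 0 = -(0\vee-\phi) = -\phi^-$ and $\phi\vee 0 = \phi^+$, the axiom $\phi+0=\phi$ delivers $\phi = \phi^+-\phi^-$.

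The crux is \ssf{3}. I first establish the auxiliary identity $\phi^+ = \phi+\phi^-$ by applying the $\vee$-version of \ssf{a12} with $\xi:=\phi$:
\[
\phi^+ \;=\; 0\vee\phi \;=\; (-\phi+\phi)\vee(0+\phi) \;=\; (0\vee-\phi)+\phi \;=\; \phi^-+\phi.
\]
Then \ssf{a12} applied directly gives
\[
\phi^+\wedge\phi^- \;=\; (\phi+\phi^-)\wedge(0+\phi^-) \;=\; (\phi\wedge 0)+\phi^- \;=\; -\phi^-+\phi^- \;=\; 0.
\]
For \ssf{4}, I invoke the $\vee$-analog of \ssf{a12} with $\xi:=\phi^++\phi^-$, together with the substitution $\phi=\phi^+-\phi^-$, $-\phi=\phi^--\phi^+$ supplied by \ssf{2}:
\[
|\phi|+(\phi^++\phi^-) \;=\; (\phi^+-\phi^-)\vee(\phi^--\phi^+)+(\phi^++\phi^-) \;=\; 2\phi^+\vee 2\phi^- \;=\; 2(\phi^+\vee\phi^-),
\]
the last equality by the $\vee$-analog of \ssf{a13}. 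Now \ssf{1} applied to $\phi^+,\phi^-$ combined with \ssf{3} gives $\phi^+\vee\phi^- = \phi^++\phi^-$, whence $|\phi| = \phi^++\phi^-$.

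The main obstacle is spotting the substitution used in \ssf{3}: the identity $\phi^+ = \phi+\phi^-$ is what turns $\phi^+\wedge\phi^-$ into an \ssf{a12} instance whose summands cancel to $0$. Everything else is bookkeeping with \ssf{a1}--\ssf{a8} and the two dual axioms.
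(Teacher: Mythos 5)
Your proof is correct, and for the most part it runs on the same engine as the paper's --- axiom \ssf{a12} plus the vector-space axioms --- but item \ssf{1} is handled by a genuinely different argument. The paper proves the two inequalities of \ssf{1} separately, order-theoretically: from $\psi\lle\phi\vee\psi$ and $\phi\lle\phi\vee\psi$ it gets $\phi+\psi-\phi\vee\psi\lle\phi\wedge\psi$ via the $\wedge$-introduction rule of Proposition~\ref{prop_basic_inferences1}, and the converse inequality dually via $\vee$-elimination. You instead obtain the equality in one purely equational stroke, instantiating \ssf{a12} at $-\phi,-\psi$ with $\xi:=\phi+\psi$ so that the left side collapses to $\psi\wedge\phi$ and the right side to $-(\phi\vee\psi)+\phi+\psi$; this is slicker but leans on the congruence of provable equality under $\wedge$ (Proposition~\ref{prop_basic_inferences1}.\ssf{2}) to simplify inside the lattice operation, and on the derived $\vee$-duals of \ssf{a12} and \ssf{a13}, which you do sketch. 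Items \ssf{2} and \ssf{3} essentially coincide with the paper's (your auxiliary identity $\phi^+=\phi+\phi^-$ is exactly what the paper extracts from item \ssf{2} before applying \ssf{a12}). For item \ssf{4} the paper computes directly $\phi\vee(-\phi)=2\phi^+-\phi$ from the dual of \ssf{a12} and then substitutes \ssf{2}, whereas you route through $\phi^+\vee\phi^-=\phi^++\phi^-$ using items \ssf{1} and \ssf{3}; both are sound, the paper's being marginally shorter. Net effect: your version trades the order-theoretic lemmas of Proposition~\ref{prop_basic_inferences1}.\ssf{3}--\ssf{4} for a heavier but uniform use of translation invariance, which makes the whole proposition a single equational calculation.
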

\begin{proof} (Sketch)
\begin{itemize}
\item[1.]  
Get $\proves \psi\lle\phi\vee\psi$ by \ssf{a14}, then $\proves\phi+\psi-\phi\vee\psi\lle\phi$  by \ssf{r2}. Similarly,  from $\proves \phi\lle\phi\vee\psi$ get $\proves\phi+\psi-\phi\vee\psi\lle\psi$.  The $\lle$ inequality in \ssf{1} now follows from \ssf{3} of Proposition~\ref{prop_basic_inferences1}. The opposite inequality can be proved in a similar way, by \ssf{a14} and \ssf{4} of Proposition~\ref{prop_basic_inferences1}.

\item[2] Notice that $\proves\phi^+-\phi^-=\phi\vee0+\phi\wedge0$ and apply \ssf{1}.

\item[3] From \ssf{2} and \ssf{a12} we obtain $\proves\phi^+\wedge\phi^-=(\phi^+-\phi^-)\wedge0+\phi^-=-\phi^-+\phi^-$.

\item[4] Follows from \ssf{2} and \ssf{a12} as  $\proves\phi\vee(-\phi)=2(\phi\vee0)-\phi=2\phi^+-\phi$.\qedhere
\end{itemize}
\end{proof}

\begin{void}[Deduction Theorem]\label{prop_ded}
The following are equivalent:
\begin{itemize}
\item[1.] $T,\,\theta \proves\phi\lle\psi$;
\item[2.] $T\proves\phi-r\theta^-\lle\psi$ for some $r\in\QQ^+$.
\end{itemize}
\end{void}

\begin{proof}
By \ssf{4} of Proposition~\ref{prop_basic_inferences1}, we have $\theta\proves\theta^-=0$. So implication \ssf{2}$\IMP$\ssf{1} is clear. To prove \ssf{1}$\IMP$\ssf{2}, assume \ssf{1} and argue by induction on the length of a derivation of $\phi\le\psi$ from $T,\theta$. If the length is $1$, then either $\phi\lle\psi$ is in $T$ or it is an axiom,  in which case the conclusion holds by taking $r=0$, or $\phi\lle\psi$ syntactically coincide with $0\lle\theta$ and all we need to prove is $T\proves0-r\theta^-\lle\theta$ for some $r\in\QQ^+$. But this clearly holds by taking $r=1$ because $T\proves\theta\leq\theta^+-\theta^-$, by Proposition~\ref{prop_basic_Riesz}.

If the last applied rule in the derivation is either \ssf{r1} or \ssf{r2}, then proceed as in the proof of Theorem~\ref{prop_lin_ded}.

If the last applied rule is \ssf{r3} then $\phi$ and $\psi$ are of the form $\phi'\wedge0$ and $\psi'\wedge0$ respectively, for some $\phi',\psi'$ such that $T,\theta \proves\phi'\lle\psi'$. By induction hypothesis there exists $r\in\QQ^+$ such that $T\proves\phi'-r\theta^-\lle\psi'$. By applying rule \ssf{r3} we obtain $T\proves(\phi'-r\theta^-)\wedge0\lle\psi'\wedge0$. Now observe that $\vdash (\phi'-r\theta^-)\wedge(-r\theta^-)\le(\phi'-r\theta^-)\wedge 0$ and that $(\phi'-r\theta^-)\wedge(-r\theta^-)=(\phi'\wedge 0)-r\theta^-$ is an instance of axiom \ssf{a12}. Then
$T\proves(\phi'\wedge0)-r\theta^-\lle(\phi'-r\theta^-)\wedge0$. The conclusion thus follows.
\end{proof}

Next we  prove Proposition~\ref{lin_cut} for $\proves$.

\begin{proposition}\label{prop_cut_elim} 
The following are equivalent:
\begin{itemize}
\item[1.] $T\proves \psi$;
\item[2.] $T,\,\phi\proves \psi$ and $T,-\phi\proves \psi$.
\end{itemize}
\end{proposition}
\begin{proof}

As \ssf{1}$\IMP$\ssf{2} is trivial, we prove \ssf{2}$\IMP$\ssf{1}. Notice that $\proves(-\phi)^-=\phi^+$ so, assuming \ssf{2}, from the Deduction Theorem we get
\begin{itemize}
 \item[] $T\proves -r\phi^-\lle\psi$ and $T\proves -s\phi^+\lle\psi$
\end{itemize}
for some $r,s\in\QQ^+$. Hence, by \ssf{3} of Proposition~\ref{prop_basic_inferences1},
\begin{itemize}
\item[] $T\proves -(r\phi^-\wedge s\phi^+)\lle\psi$
\end{itemize}
 The conclusion follows if we show that $\vdash r\phi^+\wedge s\phi^-=0$. This is clear if $0\le r,s\le 1$, in fact $\vdash 0\le r\phi^+ \wedge s\phi^-\le \phi^+ \wedge \phi^-=0$. The general case follows by using axioms \ssf{a13} and \ssf{a8}.
\end{proof}

So far all the results apply to the restricted case and to the extended one.  This is not the case with the next result.

\begin{proposition}\label{prop_ext_bound}
In the extended case, for every formula $\phi$ there is some integer $n$ such that $\proves -n\lle\phi\lle n$.
\end{proposition}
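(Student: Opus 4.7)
The natural approach is induction on the complexity of the formula $\phi$. Before starting, I would establish a small preliminary fact that will be needed throughout: whenever $p\le q$ are rationals, $\vdash p\le q$ holds at the formula level (reading $p$ and $q$ as $p\cdot1$ and $q\cdot1$). This reduces via \ssf{r2} and the vector-space axioms to the single inequality $\vdash 0\le 1$. To derive $0\le 1$, I pick any proposition letter $P$ (assuming $L\ne\emptyset$; the language is trivial otherwise), apply \ssf{a15} to get $\vdash -1\le P\le 1$, chain with \ssf{r1} to obtain $\vdash -1\le 1$, and then apply \ssf{r2} with $r=\tfrac12$ and $\xi=\tfrac12$ to conclude $\vdash 0\le 1$.

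For the base cases of the induction: $\phi=P$ is exactly axiom \ssf{a15} with $n=1$; $\phi=0$ works with $n=0$ by reflexivity (which itself follows from \ssf{a3} and \ssf{r2}); $\phi=1$ works with $n=1$ via the derivation of $-1\le 1$ sketched above.

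For the inductive step I would handle the three construction rules in turn. If $\phi=\psi+\xi$ and $\vdash -m\le\psi\le m$ and $\vdash -k\le\xi\le k$ by induction hypothesis, then two applications of \ssf{r2} (adding $\xi$, then adding $\pm m$) together with \ssf{r1} give $\vdash -(m+k)\le\psi+\xi\le m+k$, so $n=m+k$ works. If $\phi=\psi\wedge\xi$, let $n=\max(m,k)$. Axiom \ssf{a14} combined with \ssf{r1} gives the upper bound $\psi\wedge\xi\le\psi\le n$, while $-n\le\psi$ and $-n\le\xi$ together with Proposition~\ref{prop_basic_inferences1}.\ssf{3} give the lower bound. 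Finally, if $\phi=q\psi$, I separate cases on the sign of $q$: when $q\ge 0$, applying \ssf{r2} with $r=q$ and $\xi=0$ to the inductive bounds produces $\vdash -qm\le q\psi\le qm$; when $q<0$, I first negate the inductive bounds (using \ssf{r2} with $r=1$ and $\xi=-\psi+$ constant, which yields $\vdash -m\le-\psi\le m$) and then apply \ssf{r2} with $r=-q>0$. Either way I obtain $\vdash -|q|m\le q\psi\le|q|m$, and choosing any integer $n\ge|q|m$ finishes the step via the preliminary fact about rational ordering.

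The genuinely delicate point is this last step of rounding a rational bound up to an integer bound, which is what forces the preliminary work on $\vdash 0\le 1$; without the extended setting this would not be available, and indeed the proposition fails in the basic case (where unbounded valuations are allowed). Everything else is routine propagation of bounds through the Riesz-space axioms and \ssf{r2}.
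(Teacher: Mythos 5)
Your proof is correct and is exactly the ``straightforward induction on formulas'' from axiom \ssf{a15} that the paper invokes without giving details; the one non-routine point you rightly isolate --- deriving $\vdash 0\le 1$ from \ssf{a15} so that rational bounds can be rounded up to integers --- is handled correctly. (The only caveat is that this derivation needs at least one proposition letter, so the statement degenerates for the empty signature; the paper ignores this case as well.)
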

\begin{proof}
 Follows from \ssf{a15} by straightforward induction on formulas.
\end{proof}

A consequence of the proposition above is that $-1\provesL\phi$ for every formula $\phi$. Notice that there is no formula that plays the role of a contradiction in the restricted case.

\begin{remark}\label{rk_solid} In this remark, which is not relevant for the further technical developments,  we motivate our definition of theory.

In the theory of Boolean algebras, there is a well-known correspondence between homomorphisms and filters (equivalently: ideals). We recall that a similar correspondence holds for Riesz spaces.  If $f: E\rightarrow L$ is  a homomorphism of Riesz spaces, then  $F=f^{-1}[0]$ is a so called \textit{solid subspace} of $E$  and $f$ factors through the quotient epimorphism $\pi: E\to E/F$, where $E/F$ is the quotient Riesz space of $E$ with respect to $F$. More precisely, there exists a unique Riesz space monomorphism $\iota:E/F\to L$ such that  $f=\iota\circ\pi$. See~\cite{Fremlin}*{\S 14G} for details.

It is also well-known that the quotient of the set of formulas of classical propositional logic with respect to the  relation of provable equivalence is a Boolean algebra and that deductively closed set of formulas become filters in such algebra (the improper filter corresponding to any inconsistent set).

Also, in $\RR$-valued logic, it can be easily verified that the quotient $ R/\mathord\sim$ of the set $R$ of formulas of any fixed language with respect to the equivalence relation defined by $\phi\sim\psi\  \Leftrightarrow\  \proves\phi\leq\psi$ is a Riesz space, when equipped with the induced operations. 
So, in order to carry on with the similarities, one should define a theory in $\RR$-valued logic as a set  $T$ of formulas such that $T/\mathord\sim$ is a solid subspace. Actually, our definition of theory as a set of inequalities is essentially equivalent to the above. For it is easy to verify  that, if $T$ is a theory in our sense, then  the set $\{(\phi_\sim,\psi_\sim):\  T\vdash \phi\le\psi\}$ is  a preorder on $ R/\mathord\sim$ which 
\begin{itemize}
\item[1.] extends the  ordering on $R/\mathord\sim$;
\item[2.] is compatible with the Riesz space structure (in the sense expressed by deduction rules \ssf{r1}\,$\div$\,\ssf{r3} above).
\end{itemize}
Moreover, if  we replace $R/\mathord\sim$ with an arbitrary Riesz space $E$ and  $\sqsubseteq$ is a preorder on $E$ which  satisfies \ssf{1} and \ssf{2} above, then the set  $F=\{v\in E:  0\sqsubseteq v \sqsubseteq 0 \}$ is a solid subspace.  Vice versa, if $F$ is a solid subspace of $E$,  by letting  $v \sqsubseteq w\ \Leftrightarrow\ v\le w+c$ for some $c\in F,$ we obtain a  preorder that  satisfies \ssf{1} and \ssf{2}.

To sum up: the definition of  theory  is just a matter of taste. For us,  a theory is essentially a partial  ordering on the set of formulas,   which can be interpreted by  saying that some formula is always truer than some other formula,  without  reference to a notion of  \textit{absolute truth}. On the contrary, in the alternative setting presented above,  the logical constant $0$ represents absolute truth, just as in Continuous Logic, and a theory contains all the ``absolutely  true''  formulas.
\end{remark}

\section{Linear formulas}\label{lin_for}

We say that formula $\phi$ is \emph{linear combination\/} of formulas $\phi_1,\dots,\phi_n$ if it is of the form

\textit{Restricted case\/}:\quad$\displaystyle\sum^n_{i=1} a_i\phi_i$\hfil\textit{Extended case\/}: \quad $\displaystyle\sum^n_{i=1} a_i\phi_i+a$\quad

for some $a_1,\dots,a_n,a\in\QQ$, with the convention that, if $n=0$, the two summations stand for $0$  and $a$ respectively. When $a_1,\dots,a_n,a\in\QQ^+$ we say that $\phi$ is a \emph{positive linear combination}. We say that $\phi$ is a \emph{linear formula\/} if it is a linear combination of pairwise distinct proposition letters. We call the corresponding tuple $(a_1,\dots,a_n)$ the \emph{vector associated\/} to $\phi$ and $a$ the \emph{affine component\/} of $\phi$. 

Notice that each formula free from the connective $\wedge$ is  $\provesL$-equivalent to an essentially unique linear formula, which, with slight abuse, we may call its normal form. In the sequel we shall always assume that linear formulas are in normal form.

For the reader's convenience, we provide an \textit{affine} version of Farkas' Lemma that is suitable for purposes, in the sense  that it is  formulated  with respect to the field of rationals. See the comment at the beginning of \cite{Sch}*{Ch. 7} about its  validity in the setting of rationals. See  \cite{Sch}*{Corollary 7.1h} for a proof.
 
\begin{void}[Farkas' Lemma]\label{Farkas}
Let  $v_1,\dots,v_n,u\in\QQ^k$ and let  $r_1,\dots,r_n,s\in\QQ$. Let $S=\{x\in\QQ^k : 0\le v_i\cdot x+r_i \textrm{ for } i=1,\dots, n\}$ be  non-empty.
Then the following are equivalent:
\begin{itemize}
\item[1.]  $0\le u\cdot x+ s$ for all $x\in S$;
\item[2.] there exist $q_1,\dots,q_n\in\QQ^+$ such that $\displaystyle u=\sum^n_{i=1} q_iv_i$ and  $\displaystyle \sum^n_{i=1} q_ir_i\le s$.
\end{itemize}
\end{void}

We also recall the following related result (see, for instance, \cite{Bar}*{Lemma 4.2} or \cite{Tao}*{Lemma 2.54}).

\begin{lemma}\label{duality}  Let $S$ be as in  Lemma~\ref{Farkas}. If $S$ is empty then there exist $q_1,\dots,q_n\in\QQ^+$ such that \smash{$\displaystyle 0=\sum q_iv_i$} and \smash{$\displaystyle \sum q_ir_i<0$.}
\end{lemma}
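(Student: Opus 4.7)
The plan is to reduce the statement to the already-established Farkas' Lemma by homogenizing the affine system: I introduce one extra coordinate $y$ playing the role of the constant term, apply Lemma~\ref{Farkas} in $\QQ^{k+1}$ to the enlarged cone, and then recover the desired certificate by reading off the augmented coordinate.

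Concretely, I set $\tilde v_i=(v_i,r_i)\in\QQ^{k+1}$ for $i=1,\dots,n$, let $\tilde v_0=(0,\dots,0,1)$, and consider
\[
S'\,=\,\bigl\{(x,y)\in\QQ^{k+1}:\ 0\le \tilde v_i\cdot(x,y)\ \text{ for }\ i=0,1,\dots,n\bigr\}.
\]
The set $S'$ is non-empty since it contains the origin, so Lemma~\ref{Farkas} may be invoked on it. I then claim that the hypothesis $S=\emptyset$ forces the linear functional $(x,y)\mapsto -y$ to be non-negative on all of $S'$. Indeed, every $(x,y)\in S'$ satisfies $y\ge 0$ by the $i=0$ inequality; and if $y>0$ held for some $(x,y)\in S'$, dividing each inequality $v_i\cdot x+r_i y\ge 0$ by the positive rational $y$ would yield $v_i\cdot(x/y)+r_i\ge 0$ for all $i$, putting $x/y\in S$, contrary to assumption. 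Hence $y=0$ on $S'$, and in particular $-y\ge 0$ there.

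Applying Lemma~\ref{Farkas} to $S'$ with $u=(0,\dots,0,-1)$ and $s=0$ (the role of each constant $r_i$ in the lemma being played by $0$ in the homogenized system) produces $q_0,q_1,\dots,q_n\in\QQ^+$ with
\[
(0,\dots,0,-1)\,=\,q_0(0,\dots,0,1)+\sum_{i=1}^n q_i(v_i,r_i).
\]
The first $k$ coordinates give $\sum_{i=1}^n q_iv_i=0$, while the last coordinate gives $q_0+\sum_{i=1}^n q_ir_i=-1$, so $\sum q_ir_i=-1-q_0\le -1<0$, as required. No genuine obstacle arises; the only point worth a moment's thought is that the scaling step $x\mapsto x/y$ keeps us inside $\QQ^k$, which is automatic because $y\in\QQ^+$.
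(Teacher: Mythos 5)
Your proof is correct. The paper itself does not prove Lemma~\ref{duality} at all: it simply cites the literature (Bartl's Lemma~4.2, Tao's Lemma~2.54), just as it cites Schrijver for Lemma~\ref{Farkas}. What you have done instead is derive the infeasibility certificate directly from the affine Farkas' Lemma already stated in the paper, via the standard homogenization trick: lifting to $\QQ^{k+1}$ with the slack coordinate $y$, noting that $S=\emptyset$ forces $y=0$ on the lifted cone $S'$ (since any point with $y>0$ would rescale to a point of $S$, and the rescaling stays rational), and then applying Lemma~\ref{Farkas} to the functional $-y$ on the non-empty set $S'$. Reading off the last coordinate gives $\sum q_ir_i=-1-q_0<0$, and the first $k$ coordinates give $\sum q_iv_i=0$, exactly as required. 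Every step checks out, including the non-emptiness hypothesis of Lemma~\ref{Farkas} (the origin lies in $S'$) and the strictness of the final inequality. Your approach buys a self-contained treatment: the paper needs two external references where your argument needs only one.
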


\begin{void}[Finite Strong Completeness Theorem for Linear Formulas]\label{prop_lin_completeness}
Let $\phi_1,\dots,\phi_n$ and $\psi$ be linear formulas. Then the following are equivalent:

\begin{itemize}
\item[1.] $\phi_1,\dots,\phi_n\provesL\psi$; 
\item[2.] $\phi_1,\dots,\phi_n\proves\psi$;
\item[3.] $\phi_1,\dots,\phi_n\models\psi$;
\item[4.] In the restricted case, $\psi$ is a positive linear combination of $\phi_1,\dots,\phi_n$. In the extended case, $\psi$ or $-1$  are positive linear combinations of $\phi_1,\dots,\phi_n$.
\end{itemize}
\end{void} 

\begin{proof}
Implication \ssf{4}$\IMP$\ssf{1} holds in both cases, under the assumption that $\psi$ is a positive linear combination of $\phi_1,\dots,\phi_n.$  It suffices to notice  that, for all formulas $\xi, \zeta$ and all $r\in\QQ^+,$ $\zeta \proves r\zeta$  and that $\zeta\proves \xi\le \xi+\zeta$  (both by rule \ssf{r2}).  Hence  $\xi, \xi\le \xi+\zeta \proves \xi+\zeta,$ by rule \ssf{r1}. Therefore $\xi, \zeta \proves \xi+\zeta$.  In the extended case, if $-1$ is a positive linear combination of $\phi_1,\dots,\phi_n$, then $\phi_1,\dots,\phi_n \provesL-1$ and, by the remark after Proposition~\ref{prop_ext_bound}, we get $\phi_1,\dots,\phi_n \provesL \psi$.

Implication \ssf{1}$\IMP$\ssf{2} is trivial and \ssf{2}$\IMP$\ssf{3} holds by soundness. Only \ssf{3}$\IMP$\ssf{4} is left to prove. Let $P_1,\dots,P_k$ be the proposition letters that occur in the formulas $\phi_1,\dots,\phi_n,\psi$. Let $v_1,\dots, v_n, u\in\QQ^k$ be the vectors associated to $\phi_1,\dots,\phi_n,\psi$ and let $r_1,\dots,r_n,s$ be their affine components (in the restricted case assume these to be $0$). 

In order to simultaneously deal with both  cases, in the extended case, without loss of generality we assume that the inequalities $-1\lle P_i\lle 1$, for $i=1,\dots, k$, occur among $\phi_1,\dots,\phi_n$. So \ssf{3} implies that
\begin{itemize}
\item[5.] for all $x\in\QQ^k$ if $0\le v_i\cdot x+r_i$ for  all $i=1,\dots,n$ then $0\le u\cdot x+s$.
\end{itemize}

Let $S$ be as in Lemma~\ref{Farkas}. Assume assume first that $S$ is non-empty (which is certainly true in the restricted case). By Lemma~\ref{Farkas} we get $q_1,\dots,q_n, r\in\QQ^+$  such that

\hfil$q_1v_1+\dots+q_n v_n=u\quad\mbox{and}\quad q_1r_1+\dots+ q_nr_n+r = s$.

It thus follows that $\psi$ is a positive linear combination of $\phi_1,\dots,\phi_n$. 

Eventually, we consider the case when $S$ is empty.  By Lemma~\ref{duality}, there exist $q_1,\dots,q_n\in\QQ^+$ such that $\provesL \ q_1\phi_1+\dots+q_n\phi_n=-1$, as claimed in \ssf{4}.
\end{proof}

\section{Finite strong completeness}\label{completeness}

Before proving a  strong completeness theorem for finite theories, we need a preliminary result.

Let $\bar\phi=(\phi_1,\dots,\phi_n)$ be a tuple of formulas and let $\epsilon=(\epsilon_1,\dots,\epsilon_n)\in\{-1,1\}^n$. We write $\epsilon\bar\phi$ for the tuple $\epsilon_1\phi_1,\dots,\epsilon_n\phi_n$.

\begin{lemma}\label{lem_spicchi}  For every formula  $\psi$  there exist a natural number $n_\psi$ and a tuple $\bar\psi$ of linear formulas of length $n_\psi$ with the property that, for each $\epsilon\in\{-1,1\}^{n_\psi}$ there exists a linear formula $\psi_\epsilon$  for which
\begin{itemize}
\item[1.]$\epsilon\bar\psi\vdash \psi =\psi_\epsilon$
\end{itemize}
\end{lemma}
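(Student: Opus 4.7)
The plan is to proceed by induction on the construction of $\psi$, turning each occurrence of $\wedge$ into a case split governed by the sign of a suitable linear difference.

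In the base cases $\psi$ is a proposition letter, $0$, or (in the extended case) $1$, which are already linear. Here I take $n_\psi=0$, the tuple $\bar\psi$ empty, and set $\psi_\epsilon=\psi$ for the unique (empty) sign vector. The cases $\psi=q\phi$ and $\psi=\phi_1+\phi_2$ are straightforward inductive steps: I copy the tuple $\bar\phi$ from the sub-formula (or concatenate $\bar\phi^1,\bar\phi^2$ in the addition case), and define $\psi_\epsilon$ to be $q\phi_\epsilon$, respectively the normal form of $\phi^1_{\epsilon^1}+\phi^2_{\epsilon^2}$. The required derivations $\epsilon\bar\psi\vdash\psi=\psi_\epsilon$ follow from the inductive hypotheses together with \ssf{r2} and the vector-space axioms \ssf{a1}--\ssf{a8}, which convert syntactic sums and scalar multiples into their normal forms.

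The crucial case is $\psi=\phi_1\wedge\phi_2$. Let $\bar\phi^1,\bar\phi^2$ and the families $(\phi^1_{\eta^1}),(\phi^2_{\eta^2})$ be given by the inductive hypothesis. I define
$$\bar\psi\ =\ \bigl(\bar\phi^1,\ \bar\phi^2,\ (\phi^1_{\eta^1}-\phi^2_{\eta^2})_{(\eta^1,\eta^2)\in\{-1,1\}^{n_1}\times\{-1,1\}^{n_2}}\bigr),$$
so $n_\psi=n_1+n_2+2^{n_1+n_2}$. A sign vector for $\bar\psi$ has the form $\epsilon=(\epsilon^1,\epsilon^2,\delta)$, where $\delta$ is indexed by pairs $(\eta^1,\eta^2)$. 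From the first two blocks, the inductive hypothesis gives $\epsilon\bar\psi\vdash\phi_1=\phi^1_{\epsilon^1}$ and $\epsilon\bar\psi\vdash\phi_2=\phi^2_{\epsilon^2}$, so it suffices to identify $\phi^1_{\epsilon^1}\wedge\phi^2_{\epsilon^2}$ with a linear formula. The coordinate of $\delta$ at $(\epsilon^1,\epsilon^2)$ tells us the sign of $\phi^1_{\epsilon^1}-\phi^2_{\epsilon^2}$: if $\delta_{\epsilon^1,\epsilon^2}=-1$ I set $\psi_\epsilon=\phi^1_{\epsilon^1}$, otherwise $\psi_\epsilon=\phi^2_{\epsilon^2}$.

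What I need is the syntactic fact $\alpha\le\beta\vdash\alpha\wedge\beta=\alpha$, applied to $\alpha=\phi^1_{\epsilon^1}$, $\beta=\phi^2_{\epsilon^2}$ (or the other way around). The direction $\alpha\wedge\beta\le\alpha$ follows from \ssf{a14} combined with \ssf{a10}, and the reverse $\alpha\le\alpha\wedge\beta$ follows from $\alpha\le\alpha$ and the hypothesis $\alpha\le\beta$ via \ssf{3} of Proposition~\ref{prop_basic_inferences1}. The main obstacle, such as it is, lies in bookkeeping: the tuple $\bar\psi$ must be fixed before $\epsilon$ is chosen, which is why I must precompute a separate difference $\phi^1_{\eta^1}-\phi^2_{\eta^2}$ for every pair of possible outcomes at the two sub-formulas and pay the exponential blow-up in $n_\psi$. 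Once this design is in place, the derivation of $\psi=\psi_\epsilon$ under the hypotheses $\epsilon\bar\psi$ reduces to the elementary lemma on $\alpha\wedge\beta$ just mentioned, completing the induction.
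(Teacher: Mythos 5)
Your proof is correct and follows essentially the same route as the paper's: the same induction on $\psi$, the same tuple $\bar\psi$ obtained by concatenating $\bar\phi^1$, $\bar\phi^2$ with the $2^{n_1+n_2}$ precomputed differences $\phi^1_{\eta^1}-\phi^2_{\eta^2}$, and the same sign-driven selection of $\psi_\epsilon$. The only difference is that you spell out the base and linear cases and the auxiliary fact $\alpha\le\beta\vdash\alpha\wedge\beta=\alpha$, which the paper leaves implicit.
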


\begin{proof} By induction on $\psi$.  The only non-trivial case is when $\psi$ is of the form $\varphi\wedge\xi$. Let us inductively assume the statement true for $\phi$ and $\xi$. Then, for all 
$\epsilon\in\{-1,1\}^{n_\phi}$ and all $\delta\in\{-1,1\}^{n_\xi},$

\begin{itemize}
\item[2.]$\epsilon\bar\varphi,\ \delta\bar\xi,\ \ \varphi_\epsilon - \xi_\delta 
\vdash \psi= \xi_\delta$
\item[3.] $\epsilon\bar\varphi,\ \delta\bar\xi, -\varphi_\epsilon + \xi_\delta \vdash \psi= 
\varphi_\epsilon$
\end{itemize}

Let $\bar\psi$ be the concatenation  of $\bar\phi$, $\bar\xi$ and $(\phi_\epsilon - \xi_\delta)_{\epsilon\delta}$. where the tuples $\epsilon\delta$ are lexicographically ordered. We denote by $p(\epsilon\delta)$ the position of $\epsilon\delta$ in such ordering.

Let $n_\psi=n_\phi+n_\xi+2^{n_\varphi+n_\xi}$. Notice that every $\sigma\in\{-1,1\}^{n_\psi}$ can be uniquely written as a concatenation $\epsilon\delta\rho$, for some $\epsilon\in\{-1,1\}^{n_\varphi}$, $\delta\in\{-1,1\}^{n_\xi}$  and $\rho\in\{-1,1\}^{2^{n_\varphi+n_\xi}}$. For each such $\sigma$ we let $\psi_\sigma=\xi_\delta $ if the $p(\epsilon\delta)$-th  coordinate of $\rho$ is 1 and $\psi_\sigma=\varphi_\epsilon$ if the $p(\epsilon\delta)$-th coordinate of $\rho$ is $-1$. It is now straightforward to check that $\sigma\bar\psi\vdash \psi=\psi_\sigma$.
\end{proof}

\begin{proposition}
Let $\phi$ be a formula. Then the following are equivalent:\nobreak
\begin{itemize}
\item[1.] $\proves\phi$;
\item[2.] $\models\phi$.
\end{itemize}
\end{proposition}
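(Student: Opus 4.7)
The direction $\proves\phi\IMP\models\phi$ is immediate from soundness (Proposition~\ref{prop_soundness}). For the converse, my plan is to reduce the problem to finite strong completeness for linear formulas (Proposition~\ref{prop_lin_completeness}) by means of Lemma~\ref{lem_spicchi} and iterated cut-elimination.

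First, apply Lemma~\ref{lem_spicchi} to $\phi$ to obtain a tuple $\bar\phi=(\phi_1,\dots,\phi_{n_\phi})$ of linear formulas together with, for each $\epsilon\in\{-1,1\}^{n_\phi}$, a linear formula $\phi_\epsilon$ satisfying $\epsilon\bar\phi\vdash\phi=\phi_\epsilon$. By induction on $n_\phi$, iterating Proposition~\ref{prop_cut_elim} on the formulas $\phi_1,\dots,\phi_{n_\phi}$, one has $\proves\phi$ if and only if $\epsilon\bar\phi\proves\phi$ for every $\epsilon\in\{-1,1\}^{n_\phi}$. Thus it suffices to establish the latter for each sign pattern $\epsilon$.

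Fix such $\epsilon$. Since $\epsilon\bar\phi\vdash\phi_\epsilon\le\phi$, transitivity reduces the goal to showing $\epsilon\bar\phi\proves\phi_\epsilon$. The formulas $\phi_\epsilon$ and $\epsilon_1\phi_1,\dots,\epsilon_{n_\phi}\phi_{n_\phi}$ are all linear, so by Proposition~\ref{prop_lin_completeness} it is enough to check the semantic consequence $\epsilon\bar\phi\models\phi_\epsilon$. Given a model $M$ with $M\models\epsilon_i\phi_i$ for every $i$, the assumption $\models\phi$ yields $\phi^M\ge 0$, while soundness applied to Lemma~\ref{lem_spicchi} gives $\phi^M=\phi_\epsilon^M$. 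Combining these, $\phi_\epsilon^M\ge 0$, as desired.

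The real content of the argument is already packaged in Lemma~\ref{lem_spicchi} (which trades occurrences of $\wedge$ for a finite case split into linear formulas) and in Proposition~\ref{prop_lin_completeness} (a thinly disguised Farkas' lemma); the present proposition is essentially the wiring diagram connecting them through cut-elimination. The only point that requires mild care is confirming that iterated application of Proposition~\ref{prop_cut_elim} genuinely produces all $2^{n_\phi}$ sign patterns, which is a straightforward induction on $n_\phi$. Nothing in the argument distinguishes the basic from the extended case, since both Lemma~\ref{lem_spicchi} and Proposition~\ref{prop_lin_completeness} cover them uniformly.
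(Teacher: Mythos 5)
Your proof is correct and follows essentially the same route as the paper's: apply Lemma~\ref{lem_spicchi} to split into sign patterns, use soundness of $\epsilon\bar\phi\vdash\phi=\phi_\epsilon$ together with $\models\phi$ to get $\epsilon\bar\phi\models\phi_\epsilon$, invoke Proposition~\ref{prop_lin_completeness} to obtain $\epsilon\bar\phi\proves\phi_\epsilon$ and hence $\epsilon\bar\phi\proves\phi$, and discharge the hypotheses by iterated cut-elimination. The only difference is presentational: you spell out the semantic verification and the induction behind the repeated use of Proposition~\ref{prop_cut_elim}, which the paper leaves implicit.
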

\begin{proof}
Direction \ssf{1}$\IMP$\ssf{2} is soundness. We prove \ssf{2}$\IMP$\ssf{1}.
Let $n_\phi,$  $\bar\phi$ and $\phi_\epsilon,$ for $\epsilon\in\{-1,1\}^{n_\phi},$ be as in the statement of Lemma~\ref{lem_spicchi}. Then
\begin{itemize}
\item[3.] $\epsilon\bar\phi\ \proves\ \phi=\phi_\epsilon$.
\end{itemize}

By soundness we also have $\epsilon\bar\phi\models\,\phi=\phi_\epsilon$. So, assuming $\models\phi$, we obtain that $\epsilon\bar\phi\models\,\phi_\epsilon$ for every $\epsilon\in\{-1,1\}^n$. From the finite strong completeness theorem for linear formulas, Theorem~\ref{prop_lin_completeness}, we  get  $\epsilon\bar\phi\provesL \phi_\epsilon$. Hence $\epsilon\bar\phi\proves\phi$ follows from \ssf{3}. Finally, we obtain \ssf{1} by repeatedly applying Proposition~\ref{prop_cut_elim}.
\end{proof}

\begin{void}[Finite Strong Completeness Theorem]\label{coroll_completeness_qf}%
Let $\phi$ be a formula and let $T$ be a finite theory. Then the following are equivalent:\nobreak%
\begin{itemize}
\item[1.] $T\proves\phi$;
\item[2.] $T\models\phi$.\qed
\end{itemize}
\end{void}

\bigskip
For infinite $T$ the previous corollary may fail. Consider the theory $T=\{0\lle rQ\lle P :r\in\QQ^+\}$ in the language $L=\{P,Q\}$. Then $T\models -Q$ but $T\notproves-Q$, otherwise we would have $T_0\proves-Q$ for some finite $T_0\subseteq T$. But $T_0\notmodels-Q$ for any finite $T_0\subseteq T$. We shall elaborate on this in the next section.

\begin{remark}\label{compl_char_mod}  With reference to the restricted case,  let us generalize  the definition of structure given in Section~\ref{structures}. For $R$ a Riesz space, a $R$-structure is a function $M: L\rightarrow R$. Truth in a $R$-structure is defined in the natural way and it is easy to verify that the corresponding semantics is sound for $\RR$-valued logic. Let us write $\models_{\mathcal R}$ for the  relation of logical consequence with respect to the class of all $R$-structures as $R$ ranges over all Riesz spaces. Then, for all formulas $\varphi$ and all finite theories $T$,  the following are equivalent:
\begin{enumerate}
\item[1.] $T\proves\phi;$
\item[2.] $T\models_{\mathcal R}\phi;$
\item[3.] $T\models\phi.$
\end{enumerate}

By Theorem~\ref{coroll_completeness_qf}, it suffices to show that \ssf{1} $\Rightarrow$  \ssf{2}. This implication is straightforward because Theorem~\ref{prop_soundness} easily extends to $\models_{\mathcal R}.$

Notice that the equivalences above yield the analogue of \cite[Theorem 2.12]{MO} for Riesz spaces.

\end{remark}

\begin{remark}
If we work with a countable language $L$ then $\RR$-valued logic is  decidable. Actually, the set $\{\varphi:\ \vdash\varphi\}$ is clearly recursively enumerable. Moreover, by the Completeness Theorem above we have that $\not\vdash\varphi$ if and only if there is some structure $M$ such that  $\varphi^M<0$. The latter holds if and only $\varphi^M<0$ for some $M: L\rightarrow{\mathbb Q}$. Let $P_1,\dots, P_n$ be the proposition letters occurring in $\phi$. Any effective enumeration of $\QQ^n$ induces an enumeration $(M_k)_{k\in\NN}$ of all rational valued assignments of values to $P_1,\dots, P_n$. The procedure that, at step $k$, computes $\varphi^{M_k}$ yields recursive enumerability of $\{\varphi: \ \not\vdash\varphi\}$.
\end{remark}

\begin{remark}\label{rmk_LvsRR}
Extended $\RR$-valued logic faithfully interprets  \L ukas\-iewicz logic.  In order to see that,  recall  that the  connectives $\dotminus$,  $\neg$  form a complete set of connectives for \L ukasiewicz logic and notice that they are definable in the extended logic. Moreover,  the models of  \L ukasiewicz logic form an axiomatizable subclass of those of extended $\RR$-valued logic, as  simply one has to impose the condition $0\lle P$ on every proposition letter $P$. From Theorem~\ref{coroll_completeness_qf} we get that, for every formula $\phi(P_1,\dots,P_n)$ in the language of \L ukasiewicz logic, whose proposition letters are among those displayed,

\hfil$\vdash_{\mbox{\scriptsize \L}} \phi(P_1,\dots,P_n)\quad \Leftrightarrow\quad  P_1,\dots, P_n\ \vdash\ \phi(P_1,\dots,P_n)=1$,

where $\vdash_{\mbox{\scriptsize \L}}$ and $\vdash$ stand for provability in  \L ukasiewicz and in the extended $\RR$-valued logic respectively.

Similar considerations apply to continuous propositional logic, recalling that  $\{\dotminus, \neg, \frac{1}{2}\}$  is a complete set of connectives for such logic.
\end{remark}

\section{Archimedean theories}\label{Archimedean}
 
As usual, we say that a theory $T$ is \emph{satisfiable\/} if there exists a structure $M$ such that $M\models T$. In the restricted case satisfiability property  is trivial,  as every theory is satisfiable in the constant model $0.$ Hence, in the restricted case, we are actually interested in satisfiability other than in the constant model $0$.

We say that  $T$ is \emph{consistent\/} if $T\notproves\phi$ for some formula $\phi$. 

We say that $T$ is \emph{total\/} if it is consistent and, for every formula $\phi$,  $T\proves\phi$ or $T\proves-\phi$, possibly both. In other words $T$ is total  if its deductive closure is a non-trivial total  preorder on the set of formulas. The reader may verify that $T$ is total if and only if it is \emph{prime}, namely if and only if it satisfies the property that whenever $T\proves\phi\vee\psi$ then $T\proves\phi$ or $T\proves\psi$.

\begin{proposition}
Every consistent theory extends to a total theory.
\end{proposition}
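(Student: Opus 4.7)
The plan is a Lindenbaum-style argument: use Zorn's lemma to produce a maximal consistent extension, then show that maximality forces completeness via the cut-elimination theorem (Proposition~\ref{prop_cut_elim}).

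Since $T$ is consistent, by definition there is some formula $\psi_0$ with $T \not\vdash \psi_0$ (i.e.\ $T \not\vdash 0\le\psi_0$). Consider the set $\mathcal{P}$ of all theories $T'$ with $T \subseteq T'$ and $T' \not\vdash \psi_0$, ordered by inclusion. This set is non-empty since $T\in\mathcal P$. If $\mathcal{C}\subseteq\mathcal{P}$ is a chain, I claim $T^{\cup}=\bigcup\mathcal{C}$ lies in $\mathcal{P}$: if $T^{\cup}\vdash\psi_0$ then, since any derivation uses only finitely many inequalities from $T^{\cup}$, all of them already lie in a single member $T'$ of the chain, giving $T'\vdash\psi_0$ and contradicting $T'\in\mathcal{P}$. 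Zorn's lemma then furnishes a maximal $T^\ast\in\mathcal{P}$.

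Now I show $T^\ast$ is complete. It is consistent since $T^\ast \not\vdash\psi_0$. Fix an arbitrary formula $\phi$ and suppose for contradiction that $T^\ast\not\vdash\phi$ and $T^\ast\not\vdash-\phi$. In particular the inequalities $0\le\phi$ and $0\le-\phi$ are not in $T^\ast$, so $T^\ast\cup\{0\le\phi\}$ and $T^\ast\cup\{0\le-\phi\}$ are proper extensions of $T^\ast$. By maximality, both extensions fall outside $\mathcal P$, hence
\[
T^\ast,\,\phi \vdash \psi_0 \qquad\text{and}\qquad T^\ast,\,-\phi \vdash \psi_0.
\]
Cut-elimination (Proposition~\ref{prop_cut_elim}) then yields $T^\ast\vdash\psi_0$, contradicting $T^\ast\in\mathcal{P}$. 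Therefore $T^\ast\vdash\phi$ or $T^\ast\vdash-\phi$ for every $\phi$, and $T^\ast$ is complete.

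The only step with any genuine content is the final use of cut-elimination; it is crucial that Proposition~\ref{prop_cut_elim} (which ultimately rests on the deduction theorem and the identity $\vdash r\phi^+\wedge s\phi^-=0$) is available in the full system with rule \textsf{r3}, since without it a maximal element of $\mathcal{P}$ would not be forced to decide every formula. The Zorn step itself is routine, relying only on the standard finite-character property of Hilbert-style derivations.
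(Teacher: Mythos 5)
Your proof is correct and follows exactly the paper's argument: extend $T$ to a theory maximal with respect to not proving some fixed unprovable formula, then invoke cut-elimination (Proposition~\ref{prop_cut_elim}) to conclude completeness. The paper states this in three lines; your write-up merely supplies the routine Zorn's-lemma and finite-character details that the authors leave implicit.
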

\begin{proof}
Let $\xi$ be a formula such that such that $T\notproves\xi$. Then  $T$ extends to a theory $T'$ that is maximal with respect to the property that $T'\notproves\xi$. Proposition~\ref{prop_cut_elim} implies that  $T'$ is total.
\end{proof}

We say that  $T$ is \emph{unital\/} if there is a formula $\xi$ such that, for every $\phi$, there is some $r\in\QQ^+$ such that $T\proves\phi\lle r\xi$. Such a formula $\xi$ is called a \emph{unit\/} in $T$. In the extended case, constant $1$ is a unit in all theories, as a consequence of Proposition~\ref{prop_ext_bound}.

Notice that if $\xi$ is a unit in $T$, then $\xi$ is a unit in every extension of $T$.

\begin{remark}\label{max_cons_ext} It is straightforward to check that, if $\xi$ is a unit in $T$, then $T$ is consistent if and only if $T\notproves-\xi$. Moreover, if $T$ is  maximal with respect to the latter property, then $T$ is maximal consistent. Therefore unital consistent theories extend to maximal consistent theories. We shall see this is not true in general.
\end{remark}

\begin{remark} Let $T$ be a theory and let $\phi$ be a formula such that $T\notproves\phi$. Then, by Proposition~\ref{prop_cut_elim}, $T, -\phi$ is consistent. We shall repeatedly use this this fact in the sequel without further mention.
\end{remark}

By the previous remark, every maximal consistent theory is total. In the extended case, $\Th(M)$ is trivially total for every model $M$.  The same holds in the restricted case, when $M$ is not the constant model  $0$. By Proposition~\ref{prop_max_con=tot+Arch} below, $\Th(M)$ is also maximal consistent.

We write \emph{$T^+$\/} for the set of formulas $\phi$ such that $T\proves\phi$ and \emph{$T^-$} for the set of formulas $\phi$ such that  $T\proves -\phi$. If $T^+= T^-$  we say that $T$ is \emph{trivial}. So, if $T$ is non-trivial then $T^+\sm T^-$ is non-empty. Non-trivial theories are consistent: just notice that if $\xi\in T^+\sm T^-$ then $T\notproves-\xi$. The converse is not true in general, the empty theory being a counter-example. Complete theories are easily seen to be non-trivial. Consistent unital theories are non-trivial as well. 

We write $\QQ^+\phi$ for the set $\{r\phi:r\in\QQ^+\}$. 

\begin{proposition}\label{prop_maxcon_vs_comp}
Let $T$ be a consistent theory and let $\phi,\psi$ be such that $T\proves\QQ^+\phi\le\psi$. Then $T,-\phi$  is consistent.
\end{proposition}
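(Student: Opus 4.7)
I would argue by contradiction: assume that $T,-\phi$ is inconsistent, so in particular $T,-\phi\proves -\psi$. The Deduction Theorem (Proposition~\ref{prop_ded}), together with the easy identity $(-\phi)^-=0\vee\phi=\phi^+$, then produces some $r\in\QQ^+$ with $T\proves -r\phi^+\lle -\psi$, or equivalently $T\proves \psi\lle r\phi^+$.

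Next I would upgrade the hypothesis $T\proves s\phi\lle\psi$ (for $s\in\QQ^+$) to the apparently stronger $T\proves s\phi^+\lle\psi$. Indeed, the $s=0$ instance of the hypothesis yields $T\proves 0\lle\psi$; combining this with $T\proves s\phi\lle\psi$ through part~4 of Proposition~\ref{prop_basic_inferences1} gives $T\proves s\phi\vee 0\lle\psi$; and a short manipulation via the dual of axiom \ssf{a13} rewrites $s\phi\vee 0$ as $s(\phi\vee 0)=s\phi^+$.

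Taking $s=r+1$ and chaining with $T\proves \psi\lle r\phi^+$ through transitivity, I obtain $T\proves (r+1)\phi^+\lle r\phi^+$, which by part~1 of Proposition~\ref{prop_basic_inferences1} (applied with $\xi=r\phi^+$) collapses to $T\proves \phi^+\lle 0$. Since $\phi\lle\phi^+$ is derivable from Proposition~\ref{prop_basic_Riesz}, it follows that $T\proves \phi\lle 0$, that is, $T\proves -\phi$. But then $T$ and $T,-\phi$ have the same deductive closure, so the assumed inconsistency of $T,-\phi$ transfers to $T$, contradicting consistency of $T$. The only real bookkeeping is the verification of the identities $(-\phi)^-=\phi^+$ and $s\phi\vee 0=s\phi^+$ inside the calculus, and both are immediate from the definitions and from Proposition~\ref{prop_basic_Riesz}.
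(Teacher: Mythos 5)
Your proof is correct and follows essentially the same route as the paper: assume $T,-\phi$ is inconsistent, apply the Deduction Theorem together with $(-\phi)^-=\phi^+$ to get $T\proves\psi\lle r\phi^+$, and chain this with the hypothesis to derive $T\proves-\phi$, which transfers the inconsistency back to $T$. The only cosmetic difference is that the paper gets $T\proves\phi$ (hence $\proves\phi=\phi^+$) via cut-elimination before combining, whereas you upgrade the hypothesis to $T\proves s\phi^+\lle\psi$ directly via the $\vee$-introduction rule; both variants work.
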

\begin{proof}
If $T, -\varphi$ is inconsistent then $T, -\varphi \vdash  -\psi$. By assumption and by the Deduction Theorem, we get  $T\vdash \QQ^+\varphi \le r\varphi^+,$ for  some $r\in{\mathbb Q}^+$.  From inconsistency of  $T, -\varphi$ we also get $T\vdash\varphi$ hence $T\vdash\varphi=\phi^+$. Therefore $T\vdash -\varphi$, contradicting the consistency of $T$.
\end{proof}

We say that $T$ is \emph{Archimedean} if for every $\phi,\psi$ such that $T\proves\QQ^+\phi\le\psi$ then $T\proves-\phi$. Clearly $\Th(M)$ is Archimedean for every structure $M$. It follows from Theorem~\ref{coroll_completeness_qf} that every finite theory is Archimedean.

\begin{proposition}\label{prop_max_con=tot+Arch}
For every theory $T$, the following are equivalent:
\begin{itemize}
\item[1.] $T$ is maximal consistent;
\item[2.] $T$ is closed under deduction, total and Archimedean;
\item[3.] $T$ is closed under deduction, total and every $\xi\in T^+\sm T^-$ is a unit in $T$.
\end{itemize}
\end{proposition}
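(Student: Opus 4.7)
My plan is to prove the cycle of implications $(1)\IMP(2)\IMP(3)\IMP(1)$. For $(1)\IMP(2)$ I would assume $T$ is maximal consistent. Closure under deduction is immediate from maximality: if $T\proves\phi\lle\psi$, then $T$ and $T,\phi\lle\psi$ share their deductive closure, so the latter is consistent and, by maximality, coincides with $T$. For completeness I would argue by contradiction: if neither $\phi$ nor $-\phi$ were provable from $T$, then both $T,\phi$ and $T,-\phi$ would be strict extensions of $T$, hence inconsistent by maximality, and cut-elimination (Proposition~\ref{prop_cut_elim}) would make $T$ itself inconsistent. For the Archimedean property I would invoke Proposition~\ref{prop_maxcon_vs_comp}: it yields consistency of $T,-\phi$ whenever $T\proves\QQ^+\phi\lle\psi$, and maximality forces $T\proves-\phi$.

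For $(2)\IMP(3)$, I would fix $\xi\in T^+\sm T^-$ and argue by contradiction: if $\xi$ were not a unit in $T$, then some formula $\phi$ would satisfy $T\notproves\phi\lle r\xi$ for every $r\in\QQ^+$. Completeness would then deliver $T\proves r\xi\lle\phi$ for each such $r$, which is exactly $T\proves\QQ^+\xi\lle\phi$. The Archimedean hypothesis would give $T\proves-\xi$, contradicting $\xi\notin T^-$.

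For $(3)\IMP(1)$, consistency is already built into completeness, so the task reduces to excluding strict consistent extensions. I would suppose such a $T'\supsetneq T$ exists and pick some $\phi\lle\psi\in T'\sm T$. Closure of $T$ under deduction gives $T\notproves\phi\lle\psi$, so completeness yields $T\proves\psi\lle\phi$. Setting $\theta:=\phi-\psi$, this puts $\theta\in T^+$, while $\theta\notin T^-$ (otherwise $T\proves\phi\lle\psi$); the hypothesis of $(3)$ then makes $\theta$ a unit in $T$, and hence also in $T'$. For each formula $\chi$ I obtain some $r\in\QQ^+$ with $T'\proves\chi\lle r\theta$; meanwhile $\phi\lle\psi\in T'$ reads $T'\proves-\theta$, which rule \ssf{r2} lifts to $T'\proves r\theta\lle 0$, and transitivity yields $T'\proves\chi\lle 0$ for every $\chi$. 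Applying this with $-\chi$ in place of $\chi$ gives $T'\proves\chi$ for every $\chi$, contradicting the consistency of $T'$. The main obstacle is precisely this last implication: converting a single extra inequality in a hypothetical strict consistent extension into a certificate of full inconsistency. The key insight is that completeness of $T$ forces the reversed formula $\theta=\phi-\psi$ to lie in $T^+\sm T^-$, so the unit hypothesis applies to $\theta$ and can be combined with $T'\proves-\theta$ via positive linearity to collapse $T'$.
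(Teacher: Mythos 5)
Your proof is correct and takes essentially the same route as the paper's: the same cycle $(1)\Rightarrow(2)\Rightarrow(3)\Rightarrow(1)$, with $(2)\Rightarrow(3)$ identical and $(3)\Rightarrow(1)$ differing only cosmetically in that you collapse an arbitrary strict consistent extension via a general inequality $\phi\le\psi$, where the paper reduces to adjoining a single formula $0\le\xi$. The details you supply for $(1)\Rightarrow(2)$ --- cut-elimination (Proposition~\ref{prop_cut_elim}) for completeness and Proposition~\ref{prop_maxcon_vs_comp} for the Archimedean property --- are exactly what the paper leaves as ``straightforward''.
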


\begin{proof} Implication \ssf{1}$\IMP$\ssf{2} is straightforward. As for \ssf{2}$\IMP$\ssf{3}, assume \ssf{2} and, for sake of contradiction,  let $\xi\in T^+\sm T^-$ which is not a unit. Then there is a formula $\phi$ such that $T\notproves \phi\lle r\xi$ for any $r\in\QQ^+$. Being total,  $T\proves r\xi\lle\phi$ for all $r\in\QQ^+$. Hence, by the Archimedean property, $T\proves -\xi$, a contradiction. 

As for \ssf{3}$\IMP$\ssf{1}, assume \ssf{3} and let $\xi\notin T$. Completeness and closure under deduction yield that $-\xi\in T$. Hence, by assumption, $-\xi$ is a unit in $T$. Therefore for every $\phi$ there exists $r\in\QQ^+$ such that $T\proves r\xi\lle-\phi$. So $T,\xi$ is inconsistent.
\end{proof}

\begin{example}\label{ex_non_Archiemedeo}
There is a total theory $T$ that has no maximal consistent extension. In particular, $T$ is non-Archimedean, non-unital, and has only the constant model $0$.

We construct such a theory $T$ by using the generalized notion of structure introduced in Remark~\ref{compl_char_mod}. We are going to define a $\QQ[x]$-valued structure $M$. Recall that  $\QQ[x]$ has an ordered ring structure with respect to  the order induced by $r<x$, for all $r\in\QQ$. For each $i\in\omega$, the language of $M$ contains a proposition letter $P_i$ which is interpreted as the monomial $x^i$. Let $T$ be the set of inequalities that hold in $M$. As the generalized semantics  is sound, the theory $T$ is consistent and deductively closed. As the ordering on $\QQ[x]$ is a linear, $T$ is total. Moreover $T\proves 0\lle \QQ^+ P_i\lle P_{i+1}$ for all $i$ so, if  $S$ is any Archimedean  extension of $T$, then $S\models P_i=0$.  It follows that   $S$ is inconsistent.
We have just shown that  $T$ has no consistent Archimedean extension, so it has only the constant model $0$.\qed
\end{example}

A comment on the generalized semantics above: just notice that every inequality (in our sense) can be regarded  as an inequality in the language of Riesz spaces (metavariables for formulas becoming variables for vectors).  Keeping this in mind, the axioms of  $\RR$-valued logic  are true in every Riesz space and its deduction rules can be regarded as  valid deduction rules in the Riesz space setting. So, if $N$ is any Riesz space and $V\subseteq N$ is a set of linearly independent vectors, any set $T$ of weak inequalities satisfied in $N$ by  linear 
combinations of elements of $V\cup\{0\}$
can be viewed as a consistent theory in our sense, simply by regarding each
$v\in V$ as a proposition letter. In particular, when  $N$ is linearly ordered,  the set  $T$ of all 
inequalities
satisfied in $N$ by  linear combinations of  elements of $V\cup\{0\}$, 
we get  a total theory.

\begin{void}[Weak Completeness Theorem for Unital Theories]\label{prop_cons_has_model}Let $T$ be a unital theory and let $\xi$ be a unit in $T$. In the extended case further assume that  $\xi$ is $1$. Then the following are equivalent:
\begin{itemize}
\item[1.]  $T$ is  consistent;
\item[2.]  there exists a structure $M$ such that $M\models T$ and  $\xi^M=1$.
\end{itemize}
\end{void} 
\begin{proof}
Implication \ssf{2}$\IMP$\ssf{1} follows immediately from soundness. To prove \ssf{1}$\IMP$\ssf{2}, let $S$ be a maximal consistent extension of $T$ (see Remark~\ref{max_cons_ext}). Notice that, by Proposition~\ref{prop_max_con=tot+Arch}, $S$ is total and Archimedean. If $\zeta$ is a formula, we let 

\hfil$S_\zeta\ =\ \big\{r\in\QQ\ :\  S\proves \zeta\lle r\xi\big\}\quad\mbox{and}\quad S^\zeta\ =\ \big\{r\in\QQ\ :\ S\proves r\xi\lle\zeta\big\}$.

Totality and Archimedean property of $S$ imply that, for every formula $\zeta,$
\begin{itemize}
\item[1.] $\emptyset\ne S^\zeta\le S_\zeta\ne\emptyset$;  
\item[2.] $S_\zeta$ is bounded from below;  
\item[3.] $\inf S_\zeta=\sup S^\zeta$. 
\end{itemize} 

We define a structure $M$ as follows: for every proposition letter $P$  we let
\begin{itemize}
\item[4.]  $P^M\ =\ \inf S_P\ =\ \ \sup  S^P$.
\end{itemize}

Recall that $0^M=0$ and, in the extended case, $1^M=1$. Notice that, by consistency of $S$, the equalities in \ssf{3} hold even when $P$ is replaced by $0$ and, in the extended case, also by constant  $1$. We prove by  induction  that \ssf{4} extends to all  formulas. 

Let $\zeta$ be  of the form $\phi+\psi$. Let us inductively assume that $\phi^M=\inf S_\phi=\sup S^\phi$ and $\psi^M=\inf S_\psi=\sup S^\psi$. Notice that $S_\phi+S_\psi\subseteq S_{\phi+\psi}$ and that $S^\phi+S^\psi\subseteq S^{\phi+\psi}$. Then 

$$\begin{array}{ccl}
\phi^M+\psi^M & =  & \sup S^\phi+\sup S^\psi\  =\  \sup(S^\phi+S^\psi)\\[2mm]
& \le & \sup S^{\phi+\psi}\  =\  \inf  S_{\phi+\psi}\\[2mm]
&\le  & \inf (S_\phi+S_\psi)\ =\ \inf S_\phi + \inf S_\psi\ =\ \phi^M+\psi^M.
\end{array}
$$

As $\zeta^M=\phi^M+\psi^M$, we are done.

Next, let $\zeta$ be of the form $\phi\land\psi$.  Notice that   $S_\phi, S_\psi\subseteq S_{\phi\land\psi}.$ Without loss of generality, assume $\phi^M\le\psi^M$. Under the same inductive assumptions as in the previous case, we get
$\phi^M=\inf S_\phi\ge\inf S_{\phi\land\psi}.$  We claim that the previous inequality cannot be strict. For sake of contradiction, suppose it is and let $r,s\in\QQ$ be such that $\inf S_{\phi\land\psi}<r<s<\inf S_\phi.$ By totality of $S$ and by $\phi^M\le\psi^M$ we get $S\proves s\xi\le\phi$ and $S\proves s\xi\le\psi.$ Hence, by \ssf{3} of Proposition~\ref{prop_basic_inferences1}, $S\proves s\xi\le \phi\land\psi.$ On the other hand, $S\proves \phi\land\psi\le r\xi.$ It follows that $S\proves (r-s)\xi,$ contradicting consistency of $S.$ 
Therefore $(\phi\land\psi)^M=\inf S_\phi=\inf S_{\phi\land\psi}=\sup S^{\phi\land\psi}.$

The case when $\zeta$ is of the form $q\phi$ is straightforward.

Finally, it is easy to check that $M\models S$ and $\xi^M=1$.
\end{proof}

Among unital theories, Archimedean theories are those for which a strong completeness theorem holds.

\begin{void}[Completeness Theorem for Archimedean Unital Theories]\label{compl_uni} 
Let $T$ be a consistent unital theory. Then the following are equivalent:
\begin{itemize}
\item[1.]  $T$ is Archimedean;
\item[2.]  for every formula $\phi$, if $T\models\phi$ then  $T\proves\phi$. 
\end{itemize}
Moreover \ssf{2}$\IMP$\ssf{1} also holds for non-unital theories.
\end{void}

\begin{proof}
To prove \ssf{2}$\IMP$\ssf{1}, assume \ssf{2} and suppose that  $T\proves\QQ^+\phi\lle\psi$ and $T\notproves-\phi$,  for some $\phi,\psi$. By \ssf{2}, there  exists $M\models T$ such that $\phi^M>0$, which immediately yields a contradiction.

To prove \ssf{1}$\IMP$\ssf{2},  suppose that $T$ is Archimedean. Let $\phi$ be such that $T\notproves-\phi$ and  let  $\xi$  be a unit in $T$.  In the extended case further assume that $\xi$ is  $1$. Let $r\in\QQ^+$ be such that $T\notproves r\phi\lle\xi$. Then $T, \xi\lle r\phi$ is unital and  consistent. By Theorem~\ref{prop_cons_has_model} there exists $M\models T,\xi\le r\phi$ such that $\xi^M=1$. From  $M\models r^{-1}\xi\lle\phi$ we get $T\not\models -\phi$.
\end{proof}

An approximated completeness theorem, similar to that proved in \cite{BY} for  continuous logic, holds for unital theories. It is an immediate corollary of Theorem~\ref{prop_cons_has_model}.

\begin{void}[Approximated Completeness Theorem for unital theories]
Let $T$ be a consistent unital theory. Then the following are equivalent for every unit $\xi$:
\begin{itemize}
\item[1.]  $T\proves \phi+r\xi$ for all $0<r\in\QQ$;
\item[2.]  $T\models\phi$.
\end{itemize}
\end{void}

\begin{proof}
Implication \ssf{1}$\IMP$\ssf{2} follows from soundness. To prove \ssf{2}$\IMP$\ssf{1}, let $\xi$ be a unit in $T$.  In the extended case further assume that $\xi$ is  $1$. Assume that $T\notproves \phi+r\xi$, for some $0<r\in\QQ$. Hence $T,-(\phi+r\xi)$ is consistent and unital. By Theorem~\ref{prop_cons_has_model}, there exists $M\models T$ such that $M\models \phi+r\xi\lle0$ and $\xi^M=1$. Such $M$ witnesses $T\notmodels\phi$.
\end{proof}

\section*{References}

\begin{biblist}[\resetbiblist{99999999}]

\bib{AA.VV.1}{book}{
  author={AA.VV.},
  title={Handbook of mathematical fuzzy logic, vol. 1},
  editor={Cintula P. et al. },
  publisher={College Publications},
  date={2011}
}

\bib{AA.VV.2}{book}{
  author={AA.VV.},
  title={Handbook of mathematical fuzzy logic, vol. 2},
  editor={Cintula P. et al. },
  publisher={College Publications},
  date={2011}
}

\bib{Aliprantis}{book}{
   author={Aliprantis, C.D.},
   author={Burkinshaw, O.},
   title={Positive operators},
   series={Pure and Applied Mathematics},
   volume={119},
   publisher={Academic Press Inc.},
   place={Orlando, FL},
   date={1985},
   pages={xvi+367},
}

\bib{Bar}{article}{
   author={Bartl, D.},
   title={Farkas' Lemma, other theorems of the alternative, and linear programming 
   in infinite-dimensional spaces: a purely linear-algebraic approach},
   journal={Linear and Multilinear Algebra},
   volume={55},
   date={2007},
   number={4},
   pages={327--353},
   note={DOI: 10.1080/03081080600967820}
   }

\bib{BY}{article}{
   author={Ben Yaacov, I.},
   title={Continuous first order logic for unbounded metric structures},
   journal={J. Math. Log.},
   volume={8},
   date={2008},
   number={2},
   pages={197--223},
   note={arXiv:0903.4957}
}

\bib{BYrv}{article}{
   author={Ben Yaacov, I.},
   title={On theories of random variables},
   note={arXiv:0901.1584}
}

\bib{BYP}{article}{
   author={Ben~Yaacov, I.},
   author={Pedersen, A.P.},
   title={A proof of completeness for continuous first-order logic},
   journal={J. Symbolic Logic},
   volume={75},
   date={2010},
   pages={168-190},
   note={arXiv:0903.4051}
}
		
\bib{BYBHU}{article}{
   author={Ben Yaacov, I.},
   author={Berenstein, A.},
   author={Henson, C.W.},
   author={Usvyatsov, A.},
   title={Model theory for metric structures},
   conference={
      title={Model theory with applications to algebra and analysis. Vol. 2},
   },
   book={
      series={London Math. Soc. Lecture Note Ser.},
      volume={350},
      publisher={Cambridge Univ. Press},
      place={Cambridge},
   },
   date={2008},
   pages={315--427},
   label={BYBHU}
}

\bib{BYU}{article}{
   author={Ben Yaacov, I.},
   author={Usvyatsov, A.},
   title={Continuous first order logic and local stability},
   journal={Trans. Amer. Math. Soc.},
   volume={362},
   date={2010},
   number={10},
   pages={5213--5259},
   note={arXiv:0801.4303}
}	

\bib{Ca}{article}{
   author={Casari, E.},
   title={Comparative logics and abelian $l$-groups},
   journal={in Logic Colloquium 1988 (Ferro R. et. al. eds.)},
   publisher={Elsevier Science Publishers},
   date={1989},
   pages={161--190},
}

\bib{Chang1}{article}{
   author={Chang, C.C.},
   title={Algebraic analysis of many valued logics},
   journal={Trans. Amer. Math. Soc.},
   volume={88},
   date={1958},
   pages={467--490},
}

\bib{Chang2}{article}{
   author={Chang, C.C.},
   title={A new proof of the completeness of the \L ukasiewicz axioms},
   journal={Trans. Amer. Math. Soc.},
   volume={93},
   date={1959},
   pages={74--80},
}

\bib{Chang3}{article}{
   author={Chang, C.C.},
   title={Logic with positive and negative truth values},
   journal={Acta Philosophica Fennica},
   volume={16},
   date={1963},
   pages={19--39},
}

\bib{Fremlin}{book}{
   author={Fremlin, D. H.},
   title={Topological Riesz spaces and measure theory},
   publisher={Cambridge University Press},
   place={London},
   date={1974},
   pages={xiv+266}
}

\bib{Hajek}{book}{
   author={H{\'a}jek, P.},
   title={Metamathematics of fuzzy logic},
   publisher={Kluwer Academic Publishers},
   date={1998},
}

\bib{MO}{article}{
   author={Metcalfe, G.},
   author={Olivetti, N.},
   author={Gabbay, D.},
   title={Sequent and hypersequent calculi for abelian and  \L ukasiewicz logics},
   journal={ACM Transactions on Computational Logic},
   volume={5-N},
   date={2004},
   pages={1--35},
}

\bib{MS}{article}{
   author={Meyer, R.K.},
   author={Slaney, J.K.},
   title={Abelian logic from A to Z},
   journal={in Paraconsistent logic: essays on the inconsistent, Priest G. et. al. eds.},
   publisher={Philosophia Verlag},
   date={1989},
   pages={245--288},
}

\bib{Sch}{book}{
   author={Schrijver, A.},
   title={Theory of linear and integer programming},
   publisher={Wiley},
   date={2000},
}	

\bib{Tao}{book}{
   author={Tao, T.},
   title={Structure and randomness: pages from year one of a mathematical blog},
   publisher={American Mathematical Society},
   date={2008},
}

\end{biblist}

{\sc Stefano Baratella}\\
Dipartimento di Matematica\\
Universit\`a di Trento\\
via Sommarive 14, Povo, 38123 Trento\\
{\tt\small stefano.baratella@unitn.it}

\bigskip
{\sc Domenico Zambella}\\
Dipartimento di Matematica\\
Universit\`a di Torino\\
via Carlo Alberto 10, 10123 Torino\\
{\tt\small domenico.zambella@unito.it}

\end{document}